\font\smallit=cmti10
	\crefname{subsection}{Subsection}{Subsections}
	\Crefname{subsection}{Subsection}{Subsections}
\newtheorem{theorem}{Theorem}
\newtheorem{corollary}[theorem]{Corollary}
\newtheorem{lemma}[theorem]{Lemma}
\newcommand{\A}{\ensuremath{\mathcal{A}}}
\newcommand{\R}{\mathbb{R}}
\newcommand{\Np}{\mathbb{Np}}
\newcommand{\E}{\mathbb{E}}
\newcommand{\M}{\mathbb{M}}
\newcommand{\Gs}{\mathbb{Gs}}
\newcommand{\D}{\mathbb{D}}
\newcommand{\s}{\ensuremath{\mathcal{S}}}
\newcommand{\U}{\ensuremath{\mathbb{U}}}
\newcommand{\GL}{{G^\mathcal{L}}}
\newcommand{\GR}{{G^\mathcal{R}}}
\newcommand{\HL}{H^\mathcal{L}}
\newcommand{\HR}{H^\mathcal{R}}
\newcommand{\XL}{{X^\mathcal{L}}}
\newcommand{\oset}[3][0ex]{%
  \mathrel{\mathop{#3}\limits^{
    \vbox to#1{\kern-1\ex@
    \hbox{$\scriptstyle#2$}\vss}}}}
\newcommand{\osett}[3][0ex]{%
  \mathrel{\mathop{#3}\limits^{
    \vbox to#1{\kern-1.1\ex@
    \hbox{$\scriptstyle#2$}\vss}}}}
\newcommand{\conjt}[1]{\osett{\curvearrowleftright}{#1}}
\newcommand{\conj}[1]{\oset{\curvearrowleftright}{ #1}}
\newcommand{\pura}[2]{\{\emptyset^{#1}\!\mid \!\emptyset^{#2}\}}
\newcommand{\cg}[2]{\left\{ #1\!\mid \!#2\right\}}
\newcommand{\cgs}[2]{\{ #1\!\mid \!#2\}}
\newcommand{\atom}[1]{\emptyset^{#1}}
\newcommand{\mup}{\curlywedgeuparrow}
\renewcommand{\ge}{\geqslant}
\renewcommand{\le}{\leqslant}
\newcommand{\su}{\succcurlyeq}
\newcommand{\pr}{\preccurlyeq}
\newcommand{\bs}[1]{\boldsymbol#1}
\newcommand{\abss}{absolute}
\newcommand{\Abss}{Absolute}
\newcommand{\abs}{\abss\ }
\newcommand{\Abs}{\Abss\ }
\renewcommand\emptyset \varnothing
\newcommand\fuzzyU[1]{\lower0.35ex\hbox{${\ \mathrel{\rotatebox{90}{$=$}}}_{#1}\,$}}
\theoremstyle{definition}
\newtheorem{definition}[theorem]{Definition}
\newtheorem{observation}[theorem]{Observation}
\newtheorem{remark}[theorem]{Remark}
\newtheorem{example}[theorem]{Example}
\begin{document}

\begin{center}
\uppercase{\bf Absolute Combinatorial Game Theory}
\vskip 20pt
{\bf Urban Larsson\footnote{Partially supported by the Killam Trusts; urban031@gmail.com}}\\
{\smallit School of Computing, National University of Singpore, Singapore}\\
{\bf Richard J.~Nowakowski\footnote{r.nowakowski@dal.ca}}\\
{\smallit Department of Mathematics and Statistics, Dalhousie University, Canada}\\
{\bf Carlos P. Santos\footnote{Centro de Estruturas Lineares e Combinatorias, Faculdade de Ciencias, Universidade de Lisboa, Campo Grande, Edificio C6, Piso 2, 1749-016 Lisboa, Portugal; cfmsantos@fc.ul.pt}}\\
{\smallit Center for Linear Structures and Combinatorics, Portugal}\\
\end{center}

\begin{abstract}
We propose a unifying additive theory for standard conventions in  Combinatorial Game Theory, including normal-, mis\`ere- and scoring-play, studied by Berlekamp, Conway, Dorbec, Ettinger, Guy, Larsson, Milley, Neto, Nowakowski, Renault, Santos, Siegel, Sopena, Stewart (1976-2019), and others. A game {\em universe} is a set of games that satisfies some standard closure properties. Here, we reveal when the fundamental game comparison problem, ``Is $G\su H$?'', simplifies to a constructive `local' solution, which generalizes Conway's foundational  result in ONAG (1976) for normal-play games. This happens in a broad and general fashion whenever a given game universe is {\em absolute}. Games in an absolute universe satisfy two properties, dubbed {\em parentality} and {\em saturation}, and we prove that the latter is implied by the former.  Parentality means that any pair of non-empty finite sets of games is admissible as options, and saturation means that, given any game, the first player can be favored in a disjunctive sum. Game comparison is at the core of combinatorial game theory, and for example efficiency of potential reduction theorems rely on a local comparison. We distinguish between three levels of game comparison; superordinate (global), basic (semi-constructive) and subordinate (local) comparison. In proofs, a sometimes tedious challenge faces a researcher in CGT: in order to disprove an inequality, an explicit distinguishing game might be required. Here, we explain how this job becomes obsolete whenever a universe is absolute. Namely, it suffices to see if a pair of games satisfies a certain Proviso together with a Maintenance of an inequality.
\end{abstract}

\section{Introduction}
A combinatorial game is a (terminating) two-player zero-sum game of perfect information and no randomness. Given a game and a starting player, the two players alternate moving, until the current player has no options, and a winner is declared. 
Amongst the most popular conventions, we find \textit{normal-play}, a player who cannot move loses; and \textit{mis\`ere-play}, this player wins; these two conventions will henceforth be called the {\em classical conventions}. The third main track is \textit{scoring-play}, a convention with real valued `scores' attached to the terminal positions, and the two players, Left and Right, seek to maximize and minimize this score, respectively. Combinatorial game theory concerns optimal play, captured in so-called {\em outcome functions}, and traditionally they have distinct definitions, depending on the convention for hand. Likewise, the notion of a {\em disjunctive sum} of games and the definition of {\em game comparison}, used to require slightly different definitions in classical vs. scoring settings. This study concerns a unification of terminating games in an {\em absolute theory} that combines the best of all, and more. The absolute theory provides recursive solutions for game comparison, a notion that is non-constructive by definition. Recursive solutions were known before in the literature, but they were always treated individually; here we unify the approach.

The two players are usually called \textit{Left} (female) and \textit{Right} (male),\footnote{After Louise and Richard Guy, who contributed so much to this field.}. We distinguish between the set of Left-options, those positions that Left can reach in one move, and similarly the set of Right-options,
 denoted by $\GL$ and $\GR$ respectively. Any game $G$ can be represented by two such sets and we use the standard recursive definition of a game, $G=\cgs{\GL}{\GR}$. The games are commonly represented by a rooted (finite or infinite) tree, with two types of edges (Left and Right slanting respectively), called the `game tree'. The nodes are positions that can be reached during play, the edges represent the possible moves, and the root is the current position. The children of a node are all the positions that can be reached in one move and these are called the \textit{options}. 

We restrict attention to those game trees for which each node, a.k.a {\em follower} has a finite number of edges (options) and where any, not necessarily alternating, play sequence  is finite; this is sometimes called the class of {\em short} combinatorial games. Thus, any such game can be expanded in terms of elements of its \emph{terminal} positions. This gives the common proof technique `induction on the options' since the depth of the game tree of an option is at least one less than that of the original position.

The reader is invited to consult the standard text books in combinatorial game theory:  \cite{AlberNW2007,BerleCG2001,Conwa2001,Siege2013}. Although they do not cover scoring-play, they lay the foundation for the field, and in particular through their thorough development of the normal-play theory. Here, we will demonstrate that normal-play ideas are at the core of many other conventions, in particular mis\`ere- and scoring-play, and where suitable restrictions may be imposed on the games within a convention.

Let us recall the foundations of the field. Given a classical convention, there are four {\em outcome} classes. The starting player, Left or Right, wins a game $G$ if the outcome $o(G) = \mathscr N$, the second player wins if $o(G) = \mathscr P$, player Left wins independently of who starts if $o(G) = \mathscr L$, and player Right wins independently of who starts if $o(G) = \mathscr R$. The first fundamental theorem of combinatorial games states that these definitions induce a tetra partition of games.

\begin{theorem}[First Fundamental Theorem, folklore]\label{thm:main_normal}
If $G$ is a classical game, then $o(G)\in \{\mathscr N,\mathscr P,\mathscr L,\mathscr R\}$.
\end{theorem}

These outcomes inherit a partial order from a given total order of the possible results: `Left wins' $={\rm L}>{\rm R}=$ `Right wins'.  We get the  order between outcomes as $\mathscr L>\mathscr N>\mathscr R$, and $\mathscr L>\mathscr P>\mathscr R$, but where $\mathscr N$ and $\mathscr P$ are incomparable because the winner depends on who starts.

In scoring-play the situation is more varied and seemingly more challenging; instead of a tetra partition of outcomes, outcomes are represented by  ordered pairs of real numbers, thus an uncountable number of outcome classes. The purpose of this work is to demonstrate how to overcome this apparent complication, and show that, in a fundamental sense, the similarities are profounder than the differences. Formal definitions suitable for absolute study will be the content of Section~\ref{sec:def}, and onwards, while this section and the next one aims to provide a sufficient intuition to motivate a thorough study of the subject.

A {\em universe} of combinatorial games is a set of games that satisfies standard closure properties: closure under disjunctive sum, closure under taking options, closure under conjugate, and it has a neutral element in `the empty game' (Section~\ref{sec:univ}).

The game addition operator `+', called {\em disjunctive sum}, means that the current player plays in exactly one game component out of a finite sum of games (Section~\ref{sec:gameform}). For any specified universe, this induces a partially ordered monoid of games, with the neutral element the empty game, henceforth denoted ${\bs 0}$. 

\begin{definition}[Partial Order]\label{def:partord}
Consider two games $G$ and $H$ in a specified universe $\U$. Then $G\su_\U H$ if, for all games $X$ in this universe, $o(G+X)\su o(H+X)$. If $G\su_\U H$ and $H\su_\U G$, then $G=_\U H$.\footnote{We use ``$\su$'' for partial order of games.} 
\end{definition}

The subscript \U\ is omitted when the universe is understood. This is the standard definition for order of combinatorial games. Its motivation stems from games like the oriental game of {\sc go}, where the game decomposes into independent components during play. Are two given game components comparable in {\em any} possible play situation? In this sense,  Definition~\ref{def:partord} leaves no ambiguity of preference. 

A note on terminology. We often call the games $X$ in Definition~\ref{def:partord} by {\em distinguishing} games, in the sense: if there is a distinguishing game, then the inequality (equality) does not hold, and otherwise, if there is no game $X$ to distinguish the games $G$ and $H$, then the inequality (equality) holds.

But, given this seemingly overwhelming requirement for games to be comparable, one might guess that for most universes, all games would be incomparable. When, if ever, could one hope for an interesting structure in the induced partial order of games?

\subsection{The appeal of normal-play, and the second fundamental theorem}
One of the most beautiful discoveries of combinatorial game theory is that, in normal-play, the maximizer, Left, wins playing second in the game $G$ if and only if $G\su {\bs 0}$. Moreover, it is well known that normal-play games constitute a group structure. This leads to a \textit{local}, and therefore {\em constructive}, game comparison: namely, it suffices to understand $G$ and $H$ in terms of their followers.\footnote{We will use the terms `local' and `constructive' interchangeably depending on which aspect we wish to emphasize. Other terms in the literature with the same meaning is ``option-only'', ``play-comparison'', ``algorithmic'' and ``recursive''.} Specifically, $G\su H$ if and only if Left wins the game $G - H=G+(-H)$ playing second. Here, the negative of a game is the game where the players have swapped roles. 

There are three equivalent ways of expressing this result. A typical Left (Right) option of a game $G$, is denoted $G^L$ ($G^R$).
\begin{theorem}[Second Fundamental Theorem, \cite{Conwa1976a, BerleCG1982a}]\label{thm:2FT}
Consider a pair of normal-play games $G,H$. The following items are equivalent:
\begin{enumerate}
\item $G\su H$.
\item Left wins $G-H$ playing second.
\item No $G^R\pr H$ and no $H^L\su G$
\item \begin{enumerate}
  \item For all $G^R$, there is an $H^R$ such that $G^R\su H^R$ or there is a $G^{RL}$ such that $G^{RL}\su H$.
  \item For all $H^L$, there is a $G^L$ such that $G^L\su H^L$ or there is an $H^{LR}$ such that $G\su H^{LR}$.
\end{enumerate}
\end{enumerate}
\end{theorem}

The fourth item is of special interest to this work, and although intuitively clear, we will include a proof on page~\pageref{lem:main}.
In combination with the second item, the interpretation is a `maintenance' of game preference via the possible Left responses to Right's opening move, either $G^R$ or $-H^L$. In the pursuit of a generalization of the second fundamental theorem, the main task is to show that an analogous reasoning holds in a much more general context, where group structure cannot be assumed.

\subsection{The fundamental theorem of absolute universes}\label{sec:basic}
 In this study, we propose an extension to the second fundamental theorem. To this purpose, it will be necessary to decompose the outcome function in its two parts, depending on who starts. We write $o(G)=(o_L(G),o_R(G))$, where $o_L(G)$ ($o_R(G)$) denotes the Left-outcome (Right-outcome): the result in optimal play when Left (Right) starts. The formal definition of outcome will require some preliminary machinery (see Section~\ref{sec:def}), but the main idea is transparent by this notion, and it suffices for this section and Section~\ref{sec:motiv}. 
 
 A {\em Left-atomic} ({\em Right-atomic}) game is a game where Left (Right) cannot move, and a game is {\em atomic} if it is Left-atomic or Right-atomic. A game is {\em purely atomic} if it is both Left- and Right-atomic.

 By combining the fourth item in Theorem~\ref{thm:2FT}, henceforth called the Maintenance (Common Normal Part in \cite{LarssNS2018}) with a Proviso that concerns addition with atomic games, we obtain a unifying theory for many short combinatorial games.  
 
 A universe of combinatorial games is {\em absolute} if it is {\em parental} (Definition~\ref{def:parental}) and {\em outcome saturated} (Definition~\ref{def:dense}). In essence, parental means that any pair of non-empty sets of game forms of the universe makes a game form of the universe, and saturated means that given a game $G$, the there is a game $H$ such that the first player is favoured in the disjunctive sum $G+H$.  

\begin{theorem}[Absolute Fundamental Theorem]\label{thm:basic}

Consider a pair of games $G,H$ in an absolute universe, $\mathbb U$. The relationship $G\succcurlyeq H$ holds if and only if the following two conditions are satisfied,\\

\noindent
Proviso:
\begin{enumerate}[]
 \item $o_L(G+X)\geqslant o_L(H+X)$ for all Left-atomic $X\in \U$;

 \item
 $o_R(G+X)\geqslant o_R(H+X)$ for all Right-atomic $X\in \U$;
\end{enumerate}
\noindent
Maintenance:
\begin{enumerate}[]
\item For all $G^R$, there is an $H^R$ such that $G^R\succcurlyeq H^R$, or there is a $G^{RL}$ such that $G^{RL}\succcurlyeq H$;
\item For all $H^L$, there is a $G^L$ such that $G^L\succcurlyeq H^L$, or there is an $H^{LR}$ such that $G\succcurlyeq H^{LR}$.
\end{enumerate}
\end{theorem}

The Proviso adjusts for a terminal situation that is more complicated than the normal-play ending.  It is not yet a purely local consideration. But in all the universes we have encountered, it can be simplified, which is the content of Section~\ref{sec:subordinate}. The Maintenance of \cref{thm:basic} clearly replaces the infinite number of comparisons by a local condition.

We propose a taxonomy of `game comparison'.\footnote{
Taxonomy references: 
Croft, William A. \& D.A. Cruse (2004). Cognitive Linguistics. Cambridge, Cambridge University Press. Ungerer, Friedrich \& Hans-Jörg Schmid (1996). An Introduction to Cognitive Linguistics. London, Longman.}
\begin{itemize}
\item {\em Superordinate}: This is the standard non-constructive game comparison by Definition~\ref{def:partord};
\item {\em Basic}: Absolute universes have a semi-constructive game comparison, as detailed in Theorem~\ref{thm:basic}.
\item {\em Subordinate}: Many absolute universes have local/constructive comparison. Here we find, for example, the Second Fundamental Theorem, which concerns normal-play, a special lucky case, where the Proviso returns an empty condition.
\end{itemize}

Before we move on let us display relevant literature in a table format.
\begin{table}[hbt]
\caption{Important developments in the theory of additive combinatorial games. Results published since 2007 are in bold. Cyclic (or loopy) games are not (yet) included in this framework. Section~\ref{sec:motiv} gives a detailed but still informal account of how some of these game universes interact with the absolute theory.}
\begin{center}
\begin{tabular}{c|l|l|l|}
&\multicolumn{2}{c}{Play Convention}\\
\hline
Class&Normal&Mis\`ere&Scoring\\
\hline
Partizan&\cite{BerleCG1982a,Conwa1976a}&\hspace{.8cm}\textbf{\cite{MesdaO2007,Siege2015b}}&\textbf{\cite{LarssNNS2016,LarssNS2018c,Stewa2011}}\\
Dead-ending&\cite{BerleCG1982a,Conwa1976a}&\hspace{.8cm}\textbf{\cite{Mille2015, MilleR2013a, LarssMNRS2018}}&\\
Dicot/All-small&\cite{Conwa1976a},\hspace{.4cm}\textbf{\cite{McKay2011}}&\hspace{.8cm}\textbf{\cite{DorbeRSS2015}} &\cite{Milno1953, Ettin1996, Ettin2000}\\
Impartial&\cite{Grund1939, Sprag1935}&\cite{GrundS1956},\,\textbf{\cite{Plamb2005,Plamb2009,PlambS2008}}&\\
Loopy&\cite{Conwa1978a},\cite{FraenY1986}\hspace{.4cm}\textbf{\cite{Siege2009,Siege2009a}}&&\\
\hline
\end{tabular}
\end{center}
\label{table:theories}
\end{table}%

\subsection{Outline}
In Section~\ref{sec:motiv}, we recall some (absolute) universes from the literature, that motivated this work; this section can be omitted for a reader who already feels sufficiently motivated, and prefers to dive directly into the more technical definitions of absolute play, required in the proofs to come.  The forthcoming sections do not depend on Section~\ref{sec:motiv}. 

Section~\ref{sec:def} conceptualizes \abs combinatorial game theory, which leads to a partial order for universes at a superordinate level of taxonomies; that is we convert Definition~\ref{def:partord} to our setting. Section~\ref{sec:downlinked} concerns a proof technique via so-called {\em downlinked} games \cite{Siege2013}. In Section~\ref{sec:main}, we prove the main result, Theorem~\ref{thm:basic}, which promises a semi-constructive game comparison in \abs universes, via the Proviso; thus this section deals with a basic taxonomic level. In Section~\ref{sec:subordinate}, we discuss the translation of the Proviso in some familiar universes of games, corresponding to a subordinate and constructive level of game comparison.


\section{A motivation via standard universes of games}\label{sec:motiv}

This section motivates the absolute framework as presented in Section~\ref{sec:def} and onwards.  The terminology and notation introduced here is local, i.e. not yet unifying, and the rest of the paper does not depend on it. 

Independently of the winning convention, a reader may be acquainted  with other restrictions on the set of permitted games.  For example, a game is \textit{impartial}, if both players have the same options, and each option is impartial. Note that, because of the parentality condition, as outlined in Section~\ref{sec:basic}, before Theorem~\ref{thm:basic}, this restriction gives a too small universe of games. On the other hand impartial games have a more rudimental partial order in the sense that all games are either equal or incomparable; thus absolute theory is not needed, but other methods are required, such as the famous mis\'ere quotients \cite{Plamb2005,Plamb2009,PlambS2008}. 

In order to illuminate Theorem~\ref{thm:basic} further, in Section~\ref{sec:dicot} (Example~\ref{ex:dicot}) we begin by comparing some simple games in a well known absolute universe, namely the mis\`ere {\em dicots}, here denoted $\D$,  e.g. \cite{DorbeRSS2015}.\footnote{The authors prove uniqueness of canonical forms, but they omit to deduce the affiliated local game comparison.} Dicot means that if one of the players has an option, then so does the other player, and each option is a dicot. 

Dicot universes have nice properties, but they do not capture the richness of absolute theory. The {\em dead-ending} universe (Sections~\ref{sec:waiting} and \ref{sec:concretewaiting}) and the {\em guaranteed universe} (Section~\ref{sec:guaranteed}) give appealing motivations for the theory (with formal treatments in Section~\ref{sec:subordinate}). Here they are used to explain the birth of the absolute ideas, and the similarities and differences in the classical respectively scoring settings. 
A game is \textit{dead-ending}, if, whenever one of the players runs out of options, then this player will never again be able to move in this game (regarded as a component in a disjunctive sum of games). The universe of dead-ending games will be denoted $\mathbb E$. 
Scoring games have scores attached to the terminal situations, and it is possible to define a {\em guaranteed} property where a player who runs out of options in one component can never score worse in this component, if the disjunctive sum play continues. This property will define the  universe of guaranteed scoring games, $\Gs$. A preliminary observation here is that both these restrictions limit what can happen after a terminating situation for one of the players, and similar to normal-play, this player would typically not be favoured in the full disjunctive sum, by running out of moves in one of the components.

\subsection{A dicot universe}\label{sec:dicot}
The dicot restriction leads to a tremendous simplification of the Proviso, namely it reduces to $o(G)\su_\D o(H)$, because, in $\D$, the only possibility for an atomic game $X$ is $X={\bs 0}$, the empty game. 

So, why is the Proviso necessary? We let $*=\cg{{\bs 0}}{{\bs 0}}$, and $*2=\cg{\bs 0,*}{\bs 0, *}$ be literal forms.  The games $G=\cg{*2}{*2}$ and $H=0$ maintain inequality modulo dicot mis\`ere, but $G\not\su_\D H$, because $o_L(G)=_\D={\rm R}$, but $o_L(H)=_\D {\rm L}$, which indeed is captured by the Proviso.

Observe  that Maintenance in normal-play does not imply Maintenance in an absolute universe. For example, take the literal forms $G=\cg{\bs 0}{\cg{*2}{*2}}$ and $H=*$. Then $G\su_\Np H$, with $\Np$ the normal-play universe. But $G,H$ does not satisfy Maintenance modulo dicot mis\`ere.

Let us view some more examples of dicot mis\`ere-play.

\begin{example}[Theorem~\ref{thm:basic}, Dicot Mis\`ere]\label{ex:dicot}
In both normal- and mis\`ere-play, define the literal form games $*=\cg{\bs 0} {\bs 0}$, $\uparrow \; = \{\bs 0\mid *\}$ and $\mup \;=  \{\bs 0,*\mid *\}$.  Observe that, by standard reversibility, in normal-play $\mup\; =_\Np\; \uparrow$. However, in dicot mis\`ere we have that $\mup\; \succ_\D\; \uparrow$. To prove this, observe that the Maintenance holds (with say $G=\;\mup$ and $H=\;\uparrow$), and ${\rm L}=o_L(\mup)>_\D o_L(\uparrow)={\rm R}$. Thus, we get $\mup\;\su_\D\; \uparrow$ but $\uparrow\; \not\su_{\mathbb D}\; \mup$.  For another example, in dicot mis\`ere, $\uparrow \fuzzyU{\D}\; {\bs 0}$. This follows by noting that in normal-play $\uparrow\; \succ_{\Np} {\bs 0}$, so we cannot have ``$\pr$'', but in dicot mis\`ere, ${\rm R}=o_R(\uparrow)<_{\mathbb D} o_R({\bs 0})={\rm L}$.
\end{example}

Thus Theorem~\ref{thm:basic} simplifies game comparison a lot, because it is no longer required to find a distinguishing game `$X$' to separate games (see e.g. \cite{DorbeRSS2015} where that technique has been used for dicot mis\`ere).

This signals the usefulness of the absolute theorem. However, while the intuition of the Maintenance should be clear, it might take a bit longer to capture the essence of the Proviso. The Proviso has the flavour of a certain {\em waiting problem}, which is connected to the machinery of atomic games, and the universe of so-called {\em dead-ending} mis\`ere-play games \cite{MilleR2013a} serves a perfect fit.

\subsection{No waiting at a dead-end}\label{sec:waiting}
Dead-ending gives a nice structure to mis\'ere-play. Suppose that Left has no options in the game $G$ (i.e. $G$ is Left-atomic), but when played together with another game $H$, written $G+H$, then Left can play to say $G+H^L$. Now, if Right plays in the game $G$, to say $G^R+H^L$, then, possibly Left could play in the `$G$' component to open up new possibilities, unless $G$ is dead-ending. More specifically, suppose that, to win, Left must {\em wait} for Right to play first in some sub-position of the `$H$' component. This could be accomplished if Left, at this point, had benn awarded some strategic moves in the $G$ component. However, if $G$ is dead-ending, then this will never happen; Left can never hope to play there. Thus, we see an example of how a restriction of the set of games may simplify analysis of games.

The {\em waiting problem} concerns such situations, where, say Left, by waiting in $H$, can hope for a better future through opened possibilities in the $G$ component. If such threats do not exist, then Right can ignore the $G$ component and instead focus all efforts on alternating play in the $H$ component, or if he, at some point desires to shift the order of play in $H$, he may play a move in $G$.

Let us illustrate this important idea, that inspired this work, by a more concrete example. If $G$ is Left-atomic (Right-atomic) we typically write $G=\cg{\emptyset}{\GR}$ ($G=\cg{\GL}{\emptyset}$).\footnote{Later, in Section~\ref{sec:gameform}, inspired by guaranteed scoring-play, we will adorn the empty sets of options, to allow for absolute generalizations.}

\subsection{A concrete example of the waiting problem}\label{sec:concretewaiting}
Milley and Renault \cite{MilleR2013a} first noted the importance of the waiting problem via Left- and Right-atomic games in pairwise  comparison of dead-ending games. The restrictions before this were dicot universes, and Milnor \cite{Milno1953} was the first to observe that the  restriction of the dicot scoring-play universe, where it is never bad to play first, allows for local game comparison, similar to normal-play.\footnote{In fact today we know that Milnor's universe is the same as a subset of the normal-play games, namely the reduced canonical form of normal-play games, games without infinitesimals.} But, similarly to the setting for impartial games, Milnor's setting is too restricted to fit within the absolute umbrella.

Consider the literal form game $G=\cg{\emptyset}{\cg{\bs 0}{\emptyset}}$ (Figure~\ref{fig:gametree}) in a mis\`ere-play disjunctive sum with the game $*$, written $G+*$, and note, first of all,  that $G$ is not dead-ending. If the game $G$ is played alone, Left wins, playing first, but she loses if Right starts: this is a `hot Left-atomic game', where both players want to start. But, on the other hand, the empty set of options $\GL=\emptyset$ is not good for Left, played in the context of the disjunctive sum $G+*$. Namely, playing first, Left has to move to $G$, and then Right moves to the game $\cg{\bs 0}{\emptyset}$, where Left loses. The presence of $*$ allowed Right to {\em wait} for his opportunity to play in $G$, where Left's play is bound to lose.

 \begin{figure}[ht!]
 \begin{center}
 \psset{xunit=1.0cm,yunit=1.0cm,algebraic=true,dotstyle=o,dotsize=3pt 0,linewidth=0.8pt,arrowsize=3pt 2,arrowinset=0.25}
 \begin{pspicture*}(-7,0)(10,5)
 \psline(-1.54,3.14)(-0.68,1.98)
 \psline(-0.68,1.98)(-1.56,0.82)
 \rput[tl](-1.7,3.8){$G$}
 \rput[tl](-0.5,2){$\cg{\bs 0}{\emptyset}$}
  \begin{scriptsize}
 \psdots[dotstyle=*](-1.54,3.14)
 \psdots[dotstyle=*](-0.68,1.98)
 \psdots[dotstyle=*](-1.56,0.82)
 \end{scriptsize}
 \end{pspicture*}
 \caption{The game $G=\cg{\emptyset}{\cg{\bs 0}{\emptyset}}$.}\label{fig:gametree}
 \end{center}

 \end{figure}

Observe, that `hot-atomic' games do not exist in normal-play. In normal-play the worst thing that can happen, in any situation, is to run out of move options; it is well-known \cite{Conwa2001} that this cannot be good, in any context. This is not just a subtle difference between conventions. Indeed, in normal-play a large number of moves is never bad for a player, but in mis\`ere-play this is usually bad (but not always).\footnote{Normal-play has a unique so-called {\em zugzwang} (where no player wants to start), namely the neutral element 0, but mis\`ere-play has infinitely many zugzwangs, and in particular each game of the form $\cg{\overline{\ell}}{\overline{r}}$ is a zugzwang, where $\overline{\ell}$ is $\ell$ moves for Left, and $\overline{r}$ is $r$ moves for Right. Consider the game $\overline {100}$. Thus Left has 100 moves, whereas Right cannot move. This seems like a terrible game for Left, in the mis\`ere-play convention, but note that played together with the huge zugzwang $\cg{\overline {1000}}{\overline{-1000}}$, then Left wins the game $\overline {100}+\cg{\overline {1000}}{\overline{-1000}}$, because Right will have to open the huge zuzwang, no matter who starts. However, since in normal-play, the only zuzwang is 0, then this type of situation cannot happen. See the recent manuscript \cite{LarssMNRS2018}, for more on this topic, and an introduction to the relevant concept of a `perfect murder', in the context of dead-ending games.}

This discussion illustrates a usual difficulty in CGT proofs. Milley and Renault's dead-ending restriction achieves some interesting results because, as we pointed out, the potential power of waiting for new opportunities is dismantled in their universe. Indeed, if $G$ in Figure~\ref{fig:gametree} were instead the dead-ending $\cg{\varnothing}{\bs 0}$, then Right's potential ``waiting'' in $G+*$ would be useless, since it would lead to  swift Left win. 
Let us give one more example on the waiting problem, which illustrates how dead-ending mis\`ere differs from the full universe of mis\`ere-play, denoted $\M$, by using explicitly the Proviso from Theorem~\ref{thm:basic}.

\begin{example}[Dead Ending \cite{MilleR2013a, LarssMNRS2018} vs. Full Mis\`ere \cite{Siege2015b}]\label{ex:deadend}
Consider the literal form games ${\bs 1} = \cg{\boldsymbol 0}{\varnothing} $ and ${\overline {\boldsymbol 1}} = \cg{\varnothing}{\boldsymbol 0}$. In the universe of dead-ending mis\`ere-play, similar to normal-play, it holds that ${\bs 1} + {\overline {\bs 1}} =_{\E} \, {\bs 0}$, whereas in full mis\`ere-play ${\bs 1} + {\overline {\bs 1}}\; \fuzzyU{\M} \; {\bs 0}$. The Maintenance, which does not distinguish between the universes, holds. Thus, the remaining tests are in the Proviso. For dead-ending mis\`ere, it suffices to argue that ${\rm L}=o_L({\bs 1}+{\overline {\bs 1}}+X) = o_L({\bs 0}+X)$. And this holds, since $X$ is Left-atomic dead-ending: Right cannot open up any move sequence for Left in the $X$ component, when Left plays to ${\overline {\bs 1}}+X$. Moreover, ${\rm L} =o_L({\bs 0}+X)$, since X is Left-atomic. The arguments for $o_R$ are symmetric, since now $X$ is Right-atomic dead-ending.

On the other hand, for full mis\`ere, we may choose the Left-atomic distinguishing game $X=\cg{\varnothing}{\cg{{\bs 2}}{\varnothing }}$, which indeed opens up 2 new horrible moves for Left, to obtain ${\rm R}=o_L({\bs 1}+{\overline {\bs 1}}+X) < o_L({\bs 0}+X)={\rm L}$. Analogously, we may get ${\rm L} = o_R({\bs 1}+{\overline {\bs 1}}+X) > o_R({\bs 0}+X)={\rm R}$, by picking the Right-atomic distinguishing game $X = \cg{\cg{\varnothing}{{\overline {\bs 2}}}}{\varnothing }$. Hence, the confusion in terms of full mis\`ere. (There is a local justification of this by using Theorem~\ref{thm:allcomps}.)
\end{example}

Thus, we have seen the Proviso in action, and the meaning of the Left- and Right-atomic games in the Proviso should have become clearer. But, again, what is a reasonable explanation of the interaction between the Proviso and the Maintenance? This should be addressed before we plunge into the somewhat extensive and technical definitions and proofs.

The intuition is that all absolute game conventions behave in a similar fashion, with respect to maintenance of an inequality, playing in the individual games $G$ and $H$ alone as expressed in the Maintenance, except possibly at the endgame. This is easier to see whenever $H={\bs 0}$, because then Maintenance reduces to, for all $G^R$, there is a $G^{RL}$ such that $G^{RL}\su {\bs 0}$. In this way Left may ignore the $X$ component, as long as Right plays in $G$. And analysis of a move in the $X$ component may be induced by induction, unless $X$ is atomic. Whenever $H\ne \boldsymbol 0$, by lifting the framework from normal-play, then a Right move in the $H$ component may be interpreted as a conjugate of $H^L$, even if $H$ is not invertible. The Proviso, on the other hand, interprets the individual universes, and tests if the inequality still holds for atomic $X$ from the specified universe's point of view.

\subsection{Guaranteed scoring-play}\label{sec:guaranteed}
The main ideas presented so far are universal, in the sense that they do not depend on the aforementioned winning conventions.  This work sprung out of studies in {\em scoring combinatorial games}. The major novelty of a scoring universe is that for all games, each empty set of options has an adorned terminal score. Stewart \cite{Stewa2011, Stewa2019} studied the full universe of scoring combinatorial games and noticed that a special type of games are problematic, namely the so-called `hot-atomic' games. 

Suppose that Left cannot move in a game component, say $G$, i.e. $G$ is Left-atomic, and that she prefers to make no move there again. Suppose that Right is able to wait while Left moves somewhere else, and then he finds a move to a $G^R$ in a way that opens up new move options for Left inside $G^R$, and that those new options worsen her terminal score. This type of situation is analogous to our example that distinguishes the values in full mis\`ere to those of the dead-ending restriction. But for scoring games the analogue to dead-ending is the restriction called {\em guaranteed scoring-play} \cite{LarssNNS2016}, denoted $\mathbb{Gs}$, with a similar effect with respect to the waiting problem. This is achieved, not by restricting the available moves, in case of atomic games, but instead by restricting the possible terminal scores. The {\em guaranteed property} disallows hot-atomic games, at each stage of play, i.e. if a player cannot move in a game component, then the score cannot get worse for this player in this component, even if play would continue there at some later point, by the other player opening up new play possibilities.

  First a note on change of terminology: in purely move oriented games, such as normal- and mis\`ere-play, the game ${\bs 1}$ denotes one move for Left (as in Example~\ref{ex:deadend}). In a scoring universe, the number of available moves is still important, but the central concept is the score. Therefore it makes sense to instead let the game ${\bs 1}$ denote the purely atomic game, with no options for either player, and terminal score $1$, independently of who is to move. Similarly, $\bs{2}$ is the terminal game where Left is rewarded 2 points independently of who is to play, and so on.

  Let us look at the interplay of the Proviso with the Maintenance in guaranteed scoring-play. The first issue is how to interpret the Proviso in this universe. By the guaranteed property, the worst thing that can happen for Left, with respect to a Left-atomic game $X$ is that Right can control the moves in $X$ freely, and that the terminal scores are no better than the terminal score of $X$ alone, when Left starts. By the guaranteed property, then the terminal score in $X$ is the same, independently of the sequence of play, and without loss of generality, we may assume it be 0. Moreover, the issue of Right having maximal control of the moves in $X$ translates to, again by the guaranteed property, that Right can pass whenever he wishes, and that Left is forced to move only in say $G$. Therefore \cite{LarssNS2018c} the Proviso simplifies to $\underline{o}_L(G)\ge \underline{o}_L(H)$ and $\overline{o}_R(G)\ge \overline{o}_R(H)$, where the under-line (over-line) denotes that Right (Left) can pass, as an extra option at any stage of play, even as a final move. Thus such modified Right- and Left-outcomes, may be thought of as {\em waiting-protected} results in optimal play, with respect to a given starting player. In Section~\ref{sec:subord}, we will develop this idea for the mis\`ere dead-ending universe as well.

\begin{example}[Guaranteed Scoring-play]\label{ex:guaranteed}
In the guaranteed scoring universe, we have $\cg{\bs{1}}{\cg{\bs{1}}{\bs{0}}}\succcurlyeq \bs{1}$. This is justified by noting that the Maintenance is trivially satisfied if Left starts, and  
if Right starts, then $\cg{\bs{1}}{\bs{0}}$ is answered by Left moving to $\bs{1}$ which satisfies the
Maintenance. The Proviso is satisfied since
$\underline{o}_L(\cg{\bs{1}}{\cg{\bs{1}}{\bs{0}}}) = \underline{o}_L(\bs{1})  = 1$, and $ \overline{o}_R(\cg{\bs{1}}{\cg{\bs{1}}{\bs{0}}})=  \overline{o}_R(\bs{1})= 1$; in particular, Right cannot access the game ${\bs 0}$, since Left would not use the passing advantage. In contrast,
$\cg{\bs{1}}{\bs{2}}\not\succcurlyeq \bs{1}$ since the Maintenance is not satisfied; Left has no response to Right's move to
$\bs{2}$. This is analogous to $*\not\su {\bs 0}$ in normal-play. Also,
  $\cg{\cg{\bs{0}}{\bs{2} } } {\cg{\bs{1}}{\bs{0}}} \not\succcurlyeq \bs{1}$, because the Proviso does not hold, since Right's waiting-protection gives
$$\underline{o}_L(\cg{\cg{\bs{0}}{\bs{2}}}{\cg{\bs{1}}{\bs{0}}}) = 0  < 1 = \underline{o}_L(\bs{1}).$$ Observe that, in this case, the Maintenance holds. But it is irrelevant. Guaranteed scoring-play includes elements of normal-play, but is much richer.
\end{example}

By these examples, we now hope that the reader is sufficiently motivated, and prepared to dive into the notation and terminology, necessary to unify  move-oriented classical and scoring-play conventions, and exploit their common features. 

\section{Combinatorial game spaces and absolute universes}\label{sec:def}
This section has six subsections, so let us begin by outlining the content. 
Since all games are zero-sum, it is convenient to think of each terminal situation by using an associated `score', that may depend on whose turn it is to move. The {\em space} of all such games is defined in Section~\ref{sec:gameform}; followed by addition and conjugates of games. Sections~\ref{sec:gameform} and ~\ref{sec:univ} concern exclusively the {\em form} of a game, which concerns the `how-to play'. In the latter section we axiomatize the concept of a universe of games, together with parentality.  
 The evaluation of a terminal score is developed in Section~\ref{sec:eval}, together with the notion of a Combinatorial Game Space, followed by the definition of the partial outcome functions in Section~\ref{sec:outcome}, which thus addresses the `why-to play'. In Section~\ref{sec:abs}, we define outcome saturation together with the notion of an absolute universe of games, and prove that absolute is a consequence of parentality, i.e. `only form matters'. and in Section~\ref{sec:partord}, we conclude with the definition of superordinate order of games, and prove that the order is compatible with the disjunctive sum operator.
 
\subsection{Additive game forms}\label{sec:gameform}

Let $\A=(\A,+)$ be a totally ordered, additive abelian group. A terminal position will be of the form $\pura{\ell}{r}$ where $\ell, r\in \A$. The intuition, adapted from scoring game theory \cite{LarssNNS2016}, is that, if Left is to move, then the game is finished, and the `score' is $\ell$, and similarly for Right, where the `score' would be $r$. In general, if $G$ is a game with no Left options then we write $\GL=\emptyset^{\ell}$ for some $\ell\in \A$ and if Right has no options then we write $\GR=\emptyset^r$ for some $r\in \A$.\footnote{Here, the empty set is \emph{not} a game, and therefore the notation should not be confused with standard set theory, where the empty set can be an element in a set. The empty game in our notation is $\pura{\ell}{r}$ and never $\emptyset$. In our approach, the empty game carries information, and this information has to be interpreted before we are able to compare games in a given class.}

We refer to $\emptyset^a$ as an \textit{atom} and $a\in \A$ as the \textit{adorn}. A position for which at least one of the players does not have a move is called {\em atomic}, and if Left (Right)
 does not have a move is called \textit{Left- (Right-) atomic}. A \textit{purely atomic} position
 is both Left- and Right-atomic.
Let us identify  $\bs{a}=\cg{\emptyset^a}{\emptyset^a}$ for any $a\in \A$.
For example, $\bs{0}=\cg{\emptyset^0}{\emptyset^0}$ where $0$ is the neutral element of $\A$.

We separate the form of a game, which defines how to play it without incentive, from how to play it well; a  space of game forms never concerns any evaluation of results, such as a win-loss situation, which is rather the content of Sections~\ref{sec:eval} and \ref{sec:outcome}.

 \begin{definition}[Free Space of Game Forms]\label{def:freespace}
 Let $\A$ be a totally ordered abelian group and let $\Omega_0=\{\pura{\ell}{r}\mid \ell, r \in \A\}$.
 For $n>0$, $\Omega_n$ is the set of all game forms with finite sets of options in $\Omega_{n-1}$,
 including game forms that are Left- and/or Right-atomic, and the set of game forms of \textit{birthday} $n$ is $\Omega_n\setminus \Omega_{n-1}$.\footnote{The notion of birthday here is on the literal form of a game. In the literature, sometimes `birthday' concerns the game tree after reduction. Siegel \cite{Siege2013} uses the term formal birthday for the literal form birthday.}
 Let $\Omega = \bigcup_{n\ge 0} \Omega_n$. Then $\Omega = (\Omega, \A)$ is a \textit{free space} of game forms.
 \end{definition}

 Recreational combinatorial games often decompose into independent sub-positions as play progresses, e.g. {\sc domineering, clobber, konane, amazons}.
A player plays in exactly one of these sub-positions. This is one of the motivations for the {\em disjunctive sum} operator. The other motivation is that we may add any couple of games within the same convention.
We give the explicit definition in terms of the adorns. Here, and elsewhere, an expression of the type $\GL + H$ denotes the set of games  of the form $G^L+H$, $G^L\in \GL$, and this notion is only defined if $\GL$ is non-atomic.

\begin{definition}\label{def:disjsum}
Consider a free space $(\Omega,\A)$, and a pair of games $G, H\in (\Omega,\A)$. The disjunctive sum of $G$ and $H$ is given by:
\[G+H=
\begin{cases}
 \pura{\ell_1+\ell_2}{r_1+r_2} \textrm{ if } G=\pura{\ell_1}{r_1} \textrm{ and }
H=\pura{\ell_2}{r_2};\\
\cg{\emptyset^{\ell_1+\ell_2}}{\GR +H,G+\HR}, \textrm{ if }
G=\cg{\emptyset^{\ell_1}}{\GR},
H=\cg{\emptyset^{\ell_2}}{\HR},\\
 \textrm{ and at least one of $\GR $ and $\HR$
 non-atomic;}\\
\cg{\GL +H,G+\HL}{\emptyset^{r_1+r_2}}, \textrm{ if }
G=\cg{\GL}{\emptyset^{r_1}},
H=\cg{\HL}{\emptyset^{r_2}},\\
 \textrm{ and at least one of $\GL $ and $\HL$
 non-atomic;}\\
\cg{\GL +H,G+\HL}{\GR +H,G+\HR},
\textrm{ otherwise.}
\end{cases}\]
Let $((\Omega, \A),+)$ denote the induced free semigroup.
\end{definition}

Note that if $G,H\in (\Omega,\A)$, a free space, then, by Definition~\ref{def:disjsum}, $G+H\in (\Omega,\A)$.\footnote{ We use the symbol `+' for both addition in $(\A, +)$ and
for the disjunctive sum $(\Omega , +)$, defined by the surrounding context; then, a correct, but too heavy, notation would be $((\Omega, (\A , +)),+)$, where the brackets determine the respective meanings of `+'.} Since our structure is associative, it is a commutative semigroup. The proof of these facts is analogous to that given for Guaranteed scoring games \cite[Theorem 7, page 6]{LarssNNS2016}; commutativity is easy to see by Definition~\ref{def:disjsum}, since $\A$ is abelian.

The game $\bs{0}=\cgs{\emptyset^0}{\emptyset^0}\in (\Omega,\A)$
for any $\A$. In any circumstance, it should be the case that $G+\bs{0}$ and $G$ are identical. Since $\bs{0}$ represents the empty game, there is no move, and the `score' is 0 regardless of whose turn it is. We prove it to give an example of `induction on options', and also to establish the neutral element of our structure so that the semigroup is in fact a monoid. We will write $G\equiv H$ if $G$ and $H$ are identical, that is their literal forms have the same followers.

\begin{lemma}\label{lem:id}
Let $G \in ((\Omega,\A), +)$. Then $G+\bs{0}\equiv G$.
\end{lemma}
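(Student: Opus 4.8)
The plan is to prove $G + \adr{0}\equiv G$ by induction on the rank of $G$ (equivalently, by induction on options), following the four-case structure of Definition~\ref{def:disjsum}. The observation driving every case is that $\adr{0}=\langle \emptyset^0\mid\emptyset^0\rangle$ is purely atomic with both adorns equal to the identity $0$ of $\A$. Consequently, any option list that $\adr{0}$ would contribute to a disjunctive sum (a list of the form $G+\HL$ or $G+\HR$ with $H=\adr{0}$) is empty and adds nothing, while any adorn inherited from $\adr{0}$ has the shape $\ell_1+0$ or $r_1+0$, which collapses to $\ell_1$ or $r_1$ since $0$ is neutral in $\A$. These two facts are what make each case immediate.

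For the base case I would take $G$ purely atomic, say $G=\langle \emptyset^{\ell_1}\mid\emptyset^{r_1}\rangle$. Then both $G$ and $\adr{0}$ are purely atomic, the first case of Definition~\ref{def:disjsum} applies, and it gives $G+\adr{0}=\langle \emptyset^{\ell_1+0}\mid\emptyset^{r_1+0}\rangle=\langle \emptyset^{\ell_1}\mid\emptyset^{r_1}\rangle\equiv G$ by neutrality of $0$.

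For the inductive step I would split according to the atomic status of $G$. If $G$ is left atomic but not right atomic, then $\GR$ is non-atomic; since $\adr{0}$ is left atomic, this selects the second case of Definition~\ref{def:disjsum}, giving $G+\adr{0}=\langle \emptyset^{\ell_1+0}\mid\GR+\adr{0}\rangle$ once the empty list $G+\adr{0}^R$ is discarded. Applying the induction hypothesis to each element of $\GR$ yields $\GR+\adr{0}\equiv\GR$, whence $G+\adr{0}\equiv G$; the right-atomic case is symmetric via the third case. If $G$ is neither left nor right atomic, the fourth (``otherwise'') case applies, and after deleting the empty lists $G+\adr{0}^L$ and $G+\adr{0}^R$ we obtain $G+\adr{0}=\langle \GL+\adr{0}\mid\GR+\adr{0}\rangle$, which the induction hypothesis reduces to $\langle \GL\mid\GR\rangle\equiv G$.

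The only real obstacle is bookkeeping: one must verify that the branch of Definition~\ref{def:disjsum} triggered by the pair $(G,\adr{0})$ is exactly the branch matching $G$'s own atomic type. In particular, because $\adr{0}$ is atomic on both sides, the proviso ``at least one of $\GR$ and $\HR$ is not atomic'' in the second case is met precisely when $G$ is non-atomic on that side, so the case analysis on $G$ alone is exhaustive and non-overlapping. Once the correct branch is identified, collapsing adorns via $a+0=a$ and deleting the empty $\adr{0}$-contributed option lists makes each subcase routine, and the induction closes.
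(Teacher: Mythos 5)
Your proposal is correct and follows essentially the same route as the paper: induction on options with a case split on the atomic type of $G$, using that $0$ is neutral in $\A$ and that the option lists contributed by $\adr{0}$ are empty. The paper's proof is just a terser version of the same argument, stating the three non-base cases directly from Definition~\ref{def:disjsum} and closing with the induction hypothesis on $\GL+\adr{0}$ and $\GR+\adr{0}$.
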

\begin{proof}If $G$ is purely atomic, then $G=\pura{\ell}{r}$, for some $\ell,r\in \A$. Then
 $G+\bs{0} = \pura{\ell}{r}+\pura{0}{0}=\pura{\ell}{r}$, by Definition~\ref{def:disjsum}.

 Let $G=\cgs{\GL}{\GR}$ be of birthday at least one.
If $G=\cgs{\emptyset^\ell}{\GR}$ then  $G+\bs{0} = \cgs{\emptyset^{\ell}}{\GR}+\pura{0}{0} = \cgs{\emptyset^{\ell}}{\GR+\bs{0}}$,
if $G=\cgs{\GL}{\atom{r}}$ then  $G+\bs{0} = \cgs{\GL}{\atom{r}} +\pura{0}{0} = \cgs{\GL+\bs 0}{\atom{r}}$,
and if $G=\cgs{\GL}{\GR}$ then $G+\bs{0} = \cgs{\GL}{\GR}+\pura{0}{0} = \cgs{\GL+\bs{0}}{\GR+\bs{0}}$, all by the definition of disjunctive sum.

By induction, we have that  $\GL+\bs{0}\equiv\GL$ and also $\GR+\bs{0}\equiv\GR$.
Therefore $G\equiv G+\bs{0}$.
\end{proof}
In Section \ref{sec:partord} we define a partial order on this monoid.

The \emph{conjugate} of a game denotes another game where the only difference is that Left and Right have `switched roles'.
\begin{definition}\label{def:conjugate}
The conjugate of $G\in \Omega$ is
\[ \conjt{G} \, =
\begin{cases}
\pura{-r}{-\ell}, &\mbox{if $G=\pura{\ell}{r}$, $\ell,r\in \A$}\\
\cgs{\conj\GR\,}{\emptyset^{-\ell} }, &\mbox{if $G=\cgs{\emptyset^{\ell}}{\GR}$}\\
\cgs{\emptyset^{-r}}{\;\conj\GL }, &\mbox{if $G=\cgs{\GL}{\emptyset^{r}}$}\\
\cgs{\conj\GR \,}{\;\conj\GL }, &\mbox{otherwise},
\end{cases}
\]
where $\conj{\GL}$ denotes the set of games
$\conj{G^L} $, for $G^L\in \GL$, and similarly for $\GR$.

\end{definition}
By the recursive definition of the free space $(\Omega,\A)$, each combinatorial game space is closed under conjugation.
In normal-play, the games form an ordered group and each game $G$ has an additive inverse,
appropriately called $-G$ and $-G = \,\conjt{G}$. However, there are other spaces of games, for example scoring and mis\`ere
games, where $\conjt{G}$ is not necessarily $-G$ (see \cite{Mille2015}).

\subsection{Universes of games and parentality}\label{sec:univ}
Many results in the literature concern strictly smaller structures than a free space. In dicot scoring-play the classical examples are Milnor's and Hanner's non-negative incentive games \cite{Milno1953}, and later Ettinger studied all dicot scoring games \cite{Ettin1996}. Johnson \cite{Johns2011} studies dicot scoring games with length of play of fixed parity. The normal-play theory started with impartial rulesets that do not distinguish between the players \cite{Bouto1902}, and another classical normal-play restriction concerns all-small (dicot) games \cite{BerleCG2001}. Dorbec et al. study Dicot Mis\`ere-play \cite{DorbeRSS2015}, whereas Milley and Renault study dead-ending mis\`ere-play  \cite{MilleR2013a}. Plambeck's sets of impartial games generated via disjunctive sums on single rulesets \cite{Plamb2009} are universes, although not absolute. (As we will see in Section~\ref{sec:abs}, all except \cite{Bouto1902}, \cite{Milno1953}, \cite{Johns2011} and \cite{Plamb2009} fit our theory.)

\begin{definition}[Universe]\label{def:universe}
A universe of games, $((\U,\A),+)$ satisfies $(\U,\A)\subseteq (\Omega,\A)$, for some free monoid $((\Omega,\A), +)$, with
\begin{enumerate}
\item $\pura{a}{a}\in \U$ for all $a\in \A$;
\item {\em options closure}: if $G\in \U$ and $H$ is an option of $G$ then $H\in \U$;
\item {\em disjunctive sum closure}: if $G,H\in \U$ then $G+H\in \U$;
\item {\em conjugate closure}: if $G\in \U$ then $\conjt{G}\,\in \U$;
\end{enumerate}
\end{definition}
Note that the neutral element, associativity and commutativity are carried over from the free space $(\Omega, \A)$, and so each universe $\U=((\U,\A) +)$ is also a commutative monoid.

Studied sets of games are often closed under taking options (a.k.a. hereditary closure), disjunctive sum (additive closure), and conjugation (flip the game board). The next property is somewhat less common.
\begin{definition}[Parentality]\label{def:parental}
A universe $\U$ of combinatorial games is parental if, for any pair of finite non-empty sets of games,
 $\cal G, \cal H\subset \U$, then
 $\cg{\cal G}{\cal H}\in \U$.
\end{definition}

Not all universes are parental. For example, no universe of impartial games (players have the same options) is parental. No universe of the form of Milnor's positional games is parental. The restriction that it is never bad to start is disqualified by the generality of the sets $\mathcal G$ and $\mathcal H$.  All dicot universes are parental and any dead-ending universe is parental.  The restriction to dead-ending games concerns games with empty sets of options, and hence the parental property is not affected. A similar argument holds for the monoid of guaranteed scoring games. Each free space is parental (by definition).

\subsection{Evaluation maps relating to standard game spaces}\label{sec:eval}
So far, only the `form' of a game has been considered. In this section, we elaborate on the incentive to play.

\begin{definition}[Evaluation Map]\label{def:eval}
The result of a terminal game is evaluated  via one of two order preserving maps $\nu_L,\nu_R:\A\rightarrow \s$, depending on who is to move; if Left (Right) is to move in a Left-atomic (Right-atomic) game, with adorn $\ell\in \A$ ($r\in \A$), then the result is $\nu_L(\ell)$ ($\nu_R(r)$).
\end{definition}
 In play Left (Right) seeks to maximize (minimize) the result. Indeed, these evaluation maps will be extended to optimal play outcome functions in Definition~\ref{def:outcome}.\footnote{Observe that we could not have directly set the adorns to +1 and -1 respectively, to model for example that Left wins or loses normal-play. Namely this cannot model disjunctive sum play, where there is no concept of win/loss on individual game components. But luckily, this idea is instead neatly captured by the $\nu$ function. In scoring-play however, `the winner' is not the relevant concept, and again $\nu$ is flexible enough to instead capture the sum of the terminal component scores. See Lemma~\ref{lem:outcome1} for more on this topic.}

Let us first gather what we have got so far, by combining a Free Space with an evaluation function, in the notion of  a Combinatorial Game Space.

\begin{definition}[Combinatorial Game Space]\label{def:space}
A combinatorial game space, abbreviated $\Omega$, is the free commutative monoid $((\Omega, \A), +)$, together with a totally ordered set $\s$ and two order preserving evaluation maps $\nu_L:\A\rightarrow \s$ and $\nu_R:\A\rightarrow \s$. That is
$$\Omega=((\Omega,\A),\s, \nu_L, \nu_R,+),$$
where `+' is the disjunctive sum over the free space $(\Omega, \A)$. If $| \A | > 1$ then $(\Omega, \A)$ is a scoring-play space, with $\nu(a) = \nu_L(a) = \nu_R(a)$, for all $a\in \A$.
\end{definition}

\begin{observation}\label{obs:lit}
In the literature, combinatorial games are normal-play, mis\`ere-play or scoring-play.

\begin{enumerate}
\item \textit{Normal-play} corresponds to the trivial group $\A=\{0\}$ and the set $\s=\{-1,+1\}$, together with the maps
$\nu_L(0)=-1$, $\nu_R(0) =+1$,
\item \textit{Mis\`{e}re-play} corresponds to the trivial group $\A=\{0\}$ and the set $\s=\{-1,+1\}$, together with the maps  $\nu_L(0) = +1$, $\nu_R(0) =-1$,
\item \emph{Scoring-play} corresponds to $\A=\s=\mathbb R$,
with its natural order and addition, and with $\nu$, the identity map.
\end{enumerate}
\end{observation}

A universe $\U\subseteq (\Omega,\A)$ inherits the totally ordered set $\s$ and the two order preserving evaluation maps $\nu_L:\A\rightarrow \s$ and $\nu_R:\A\rightarrow \s$ from its enclosing free space, which defines the structure $\U=((\U,\A),\s, \nu_L, \nu_R,+)$.

Only `forms' are required to define universes of games, but one mostly thinks of a universe of games in terms of an associated winning convention. These topics are expanded upon in the paper \cite{LarssNSabsinf}.

\begin{remark}It is easy to imagine scoring-play games that do not satisfy our axioms. For example, the players score points in some board game, but the result depends on the final score difference modulo 3.
We might have $\A=\{0,1,2\}$, where addition is modulo 3; $\s=\{-1,0,+1\}$, $\nu(0) = 0$, $\nu(1) = +1$,
and $\nu(2) = -1$, and so $\nu$ is not order preserving.
\end{remark}

\subsection{The outcome function}\label{sec:outcome}
The mappings of adorns in $\A$ to elements of $\s$, vial the evaluation maps $\nu_L$ and $\nu_R$ from Definition~\ref{def:space} is extended to generic positions via two recursively defined \emph{partial outcome functions}. 
The definition is general enough to match the usual definitions of outcomes for several universes studied in literature, in the sense of providing a generalization of Observation~\ref{obs:lit} to any game $G$. The outcome functions are not sensitive to the specific universe, only to the game space.

\begin{definition}\label{def:outcome}
Let $\Omega$ be a combinatorial game space, and consider given evaluation maps $\nu_L:\A\rightarrow \s$ and $\nu_R:\A\rightarrow \s$. The Left- and Right-outcome functions are $o_L:\Omega \rightarrow \s, o_R:\Omega\rightarrow \s$,
where
$$o_L(G) = \begin{cases}
\nu_L(\ell) & \textrm{if  $G = \cg{\emptyset^\ell}{\GR}$,} \\
 \max_L\{o_R(G^L)\} & \textrm{otherwise}
\end{cases}
$$

$$o_R(G) = \begin{cases}
\nu_R(r) & \textrm{if  $G = \cg{\GL}{\emptyset^r}$,} \\
 \min_R\{o_L(G^R)\} & \textrm{otherwise,}
\end{cases}
$$
where the $\max_L$ ($\min_R$) ranges over all Left (Right) options of the given game $G\in \Omega$. The outcome of $G$ is the ordered pair of Left- and Right-outcomes $o(G) = (o_L(G), o_R(G))$.
\end{definition}

A brief note on terminology: the function $o_L$ is here called the Left-outcome, and this function concerns optimal play results, when player Left starts. Usually, when we refer to `outcome' in CGT, the move-flag has not yet been assigned; hence this function is necessarily an  ordered pair $o = (o_L, o_R)$. The outcome function turns a universe of games into a partially ordered monoid, and this we show in Section~\ref{sec:partord}.

Let $G\in \U$. From Definition \ref{def:outcome} we have that $o_L(G)=\nu_i(\ell)$ and
$o_R(G)=\nu_j(r)$ for some $\ell,r\in\A$, and $i,j\in\{L,R\}$. Therefore we may always assume that the set of results is $\s=\{ \nu_L(a):a\in\A\}\cup\{\nu_R(a):a\in \A\}$. If the union of the images were smaller than the union of the codomains, then we restrict $\s$ to be the union of the images. With this convention, we get the following trivial observation.

\begin{lemma}\label{lem:outcome0} If $|\A| = 1$ then $|\s| \le 2$.
\end{lemma}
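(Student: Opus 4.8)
The plan is to lean entirely on the normalization convention stated in the paragraph immediately preceding the lemma: after restricting the codomain if necessary, we may take $\s=\{\nu_L(a):a\in\A\}\cup\{\nu_R(a):a\in\A\}$, that is, $\s$ is exactly the union of the images of the two evaluation maps. Once we commit to this reduced form of $\s$, the statement becomes a counting exercise on those images.

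First I would observe that $|\A|=1$ forces $\A$ to be the trivial group $\{0\}$, since any group contains at least its identity element and here that is the only element. Consequently $\nu_L$ and $\nu_R$ are maps out of a one-point set, so each has a singleton image: $\{\nu_L(a):a\in\A\}=\{\nu_L(0)\}$ and $\{\nu_R(a):a\in\A\}=\{\nu_R(0)\}$. Substituting into the reduced description of $\s$ gives $\s=\{\nu_L(0),\nu_R(0)\}$, a set with at most two elements (exactly one if $\nu_L(0)=\nu_R(0)$, and two otherwise). Hence $|\s|\le 2$.

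There is no genuine obstacle here; the content of the lemma is entirely absorbed into the preceding convention on $\s$, which is why the authors flag it as a trivial observation. The only point that needs a moment's care is to justify that we are entitled to use the reduced form of $\s$ rather than an arbitrary codomain of $\nu_L,\nu_R$ — but this is precisely what the parenthetical remark before the lemma licenses, so it requires no additional argument.
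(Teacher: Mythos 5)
Your proof is correct and is essentially identical to the paper's one-line argument: both reduce to the convention that $\s$ is the union of the images of $\nu_L$ and $\nu_R$, which for $\A=\{0\}$ is just $\{\nu_L(0),\nu_R(0)\}$. Nothing further is needed.
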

\begin{proof} If $|\A|=1$ then $\s$ contains exactly $\nu_L(0)$ and $\nu_R(0)$, at most 2 values.
\end{proof}
Note that, if $|\s|=1$, then play is trivial.

If one component in a disjunctive sum is a purely atomic game, then the outcome functions are easy to calculate.
We state them for Left-outcomes only.

\begin{lemma}\label{lem:outcome1} Let $G, H \in \U$, a universe of combinatorial games, and let $\bs{c}~=~\cgs{\atom{c}}{\atom{c}}$.

\begin{itemize}
\item If $G = \cgs{\atom{a}}{\GR}$, $H = \cgs{\atom{b}}{\HR}$, then $o_L(G+H) =\nu_L(a+b)$;
\item If $o_L(G)=\nu_L(a)$, then $o_L(G+ \bs{c}) = \nu_L(a+c)$;
\item $o_L(G+ \bs{c})\ge o_L(H+ \bs{c})$ if and only if $o_L(G)\ge o_L(H)$.
\end{itemize}
\end{lemma}

\begin{proof}
These follow directly from Definition~\ref{def:disjsum} and Definition~\ref{def:outcome}.
\end{proof}

\subsection{Saturation, parentality and absolute universes}\label{sec:abs}
The {\em adjoint} of a game is similar to the conjugate. We will have use for it in Theorem~\ref{thm:pardense}, where we prove that parentality implies saturation in absolute universes. The following definition is a generalization of the standard concept from mis\`ere play theories.

\begin{definition}[Adjoint]\label{def:adjoint}
Consider a universe $\U$ and a game $G\in \U$. Then the adjoint of $G$ is the game
\[
G^\circ =
\begin{cases}
\cg{-{\bs r}}{-{\bs \ell}} & \textrm{if  $G = \pura{\ell}{r}$}; \\
\cg{\GR^\circ}{-{\bs \ell}} & \textrm{if $|\GR|>0$ and $\GL=\atom{\ell}$}; \\
\cg{-{\bs r}}{\GL^\circ} & \textrm{if $|\GL|>0$ and $\GR=\atom{r}$}; \\
\cg{\GR^\circ}{ \GL^\circ} & \textrm{otherwise},
\end{cases}
\]
and where $\circ$ applied to a set operates on its elements.
\end{definition}

In all universes studied in the literature, for any game $G$, it is possible to conceive a disjunctive sum $G+H$ where the first player has the advantage. Let $*=\cg{{\bs 0}}{{\bs 0}}$.  For example, regarding normal-play, take $H=-G+*$; regarding mis\`ere-play, take $H=\{G^\circ |G^\circ\}$; regarding scoring-play, take $H=\cg{\bs s}{-\bs s}$, with a sufficiently large $s$, or in terms of the adjoint let $H=\{G^\circ+\bs 1 |G^\circ-\bs 1\}$. The following definition formalizes this idea in terms of {\em saturation}.

\begin{definition}[Outcome Saturation]\label{def:dense}
A universe $\U$ of combinatorial games is saturated if, for all $G\in \U$, for any pair $x,y\in \s$,
there is an $H\in \U$ such that $o_L(G+H)\ge x$ and $o_R(G+H)\le y$.
\end{definition}

\begin{definition}[Absolute Universe]\label{def:cgs}
A universe is \abs if it is parental and saturated.
\end{definition}

We will prove that absolute universes have rich algebraic structures, and the naming is inspired by \emph{neutrality} with respect to `winning condition'.\footnote{Naming is inspired by ``neutral'' or ``absolute geometry'', a relaxation of Euclidean geometry, that provides an umbrella for many geometries. By analogy, in absolute geometry, the four first postulates together give a rich theory, and it is neutral with respect to the parallel postulate.}

It turns out that saturation is a consequence of parentality. 

\begin{theorem}\label{thm:pardense}
Parental universes are outcome saturated.
\end{theorem}
\begin{proof}
Consider a parental universe $\U\subset \Omega$. First of all note that if $G\in \U$, then $\conjt{G},G^\circ\in\U$.

Consider the case of $\U$ scoring-play, and let $G\in \U$. Then the evaluation map is independent of player. (The result may of course depend of who is to play, as is the case for $o(\pura{\ell}{r})=(\nu(\ell),\nu(r))$.)
Consider given $x,y\in S$. Since $\nu$ is order preserving, there is an $\ell \in \A$ such that $\nu(\ell)\ge x$, and there is an $r\in \A$ such that $\nu(r)\le y$. 
Let $H = \,\conjt{G}+\cg{\bs \ell}{\bs r}$, and note that, since $\U$ is parental, $\cg{\bs \ell}{\bs r}\in \U$.  If Left starts, she can play to $G+\!\conj{G}+\bs \ell$, and mimic all remaining moves. This proves that $o_L(G+H)\ge \nu(\ell)\ge x$. The proof of $o_R(G+H)\le y$ is similar. 

Next, we consider the case when $\A=\{0\}$ and $\s=\{-1,1\}$. It suffices to prove that, for any $G\in \U$, there is an $H\in \U$ such that $o_L(G+H)=1$ and $o_R(G+H)=-1$. Suppose first that $\nu_L(0)=1$ and $\nu_R(0)=-1$, i.e. the mis\`ere-play convention. Take $H=G^\circ+*$, and note that $H\in \U$ since $\U$ is parental. Left can play to $G+\,G^\circ$, and mimic Right until he makes the last move. This shows that $o_L(G+H)=\nu_L(0)=1$. Similarly, $H=G^\circ+*$ gives $o_R(G+H)=\nu_R(0)=-1$.

Next, suppose that $\nu_L(0) = -1$ and $\nu_R(0) = 1$ (i.e. normal-play). Then instead take $H=\;\conjt{G}+\,*\in \U$, by parentality. Left plays to $G\,+\!\conjt{G}$ and mimics Right, until she takes the last move. This shows that $o_L(G+H)=\nu_R(0)=1$, and similarly $o_R(G+H)=\nu_L(0)=-1$.
\end{proof}

Note that for normal- and mis\`ere-play the proof gives that there is a game $H$ that satisfies `Next player wins'. Similarly one can find $H$ satisfying the other outcome classes.
\begin{corollary}
Any parental normal- or mis\`ere-play universe $\U$ satisfies for all $G\in \U$, for any $x,y\in\{-1,1\}$, there is an $H\in \U$ such that $o_L(G+H)=x$ and $o_R(G+H)=y$.
\end{corollary}
 \begin{proof}
 Modify the game $H$ in the second part of the proof of Theorem~\ref{thm:pardense}. We give more details in Observation~\ref{obs:misout}.
 \end{proof}
Hence, for normal- and mis\`ere-play, saturation has this stronger notion.
For scoring-play, one may obtain a stronger notion in the following spirit.

\begin{corollary}
A scoring-play universe satisfies for all $G\in \U$, for any given pair $x,y\in\s$, there is an $H\in \U$ such that $o_L(G+H)\ge x$ and $o_R(G+H)\le y$, and there is a $K\in \U$ such that $o_L(G+K)\le x$ and $o_R(G+K)\ge y$.
\end{corollary}
 \begin{proof}
 This is a consequence of the first part of the proof of Theorem~\ref{thm:pardense}, but for the second part of the statement, use instead $H={G}^\circ+*$.
 \end{proof}
The important corollary is the next one. It proves that only form matters, when it comes to the validation of whether a universe is absolute.  
\begin{corollary}\label{cor:parabs}
Any parental universe is absolute.
\end{corollary}
\begin{proof}
This is a direct consequence of Theorem~\ref{thm:pardense}.
\end{proof}

\subsection{Order in universe}\label{sec:partord}
In general, two games $G$ and $H$, are {\em equal} if both players are indifferent to playing $G$ or $H$ in any `situation', which  means in any disjunctive sum since we are discussing combinatorial games. This idea is usually extended to define a partial order on games.

\begin{definition}\label{def:order}(Superordinate Game Comparison) 
Let $\U\subset\Omega$ be a universe of combinatorial games. For $G, H \in \U$, $G\su H $ modulo $\U$, or $G\su_{\U} H $, if and only if
$o_L(G+ X)\ge o_L(H+ X)$ and $o_R(G+ X)\ge o_R(H+ X)$, for all games $X\in \U $. And $G=H$ if $G\su H $ and $H\su G $.
\end{definition}

Definition \ref{def:order} can be extended to have the distinguishing games in a sub- or super-set of $\U$.
 Suppose, for example, that $G=H$ are Milnor type games.
Then $G, H$ could perhaps be distinguished in the general dicot universe, the guaranteed universe, or perhaps
in the free space of all scoring combinatorial games, and, in general, the relation between the games is not the same. The importance of `modulo $\U$' will be expanded upon in Section~\ref{sec:subordinate}.
When the context is clear, however, instead of the phrase ``for $G, H \in \U$, $G\su H $ modulo $\U$", we often write simply ``for $G, H \in \U$, $G\su H$".

The next result shows that the order is compatible with the disjunctive sum operator, so that a
universe is in fact a partially ordered quotient monoid.

\begin{theorem} \label{sumorder} Let $G,H\in \U$.
If $G\su H$ then, for all $J\in\U$, $G+J\succcurlyeq H+J$.
\end{theorem}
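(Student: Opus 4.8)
The plan is to derive the conclusion directly from the definition of $\su$ (Definition~\ref{def:order}), using only that $\U$ is a commutative monoid under disjunctive sum. Concretely, to establish $G+J\su H+J$ I must verify, for every test game $X\in\U$, the two inequalities $o_L((G+J)+X)\ge o_L((H+J)+X)$ and $o_R((G+J)+X)\ge o_R((H+J)+X)$. The key observation is that re-associating the triple sum turns each test against $G+J$ (resp.\ $H+J$) into a test against $G$ (resp.\ $H$) with a \emph{different} distinguishing game, namely $J+X$, and this new game already lies in $\U$.

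First I would fix an arbitrary $X\in\U$ and set $Y:=J+X$. By disjunctive-sum closure (item (3) of Definition~\ref{def:universe}), $Y\in\U$. By associativity and commutativity of $+$ on $\U$---carried over from the free space $(\Omega,\A)$, as noted after Definition~\ref{def:universe}---we have $(G+J)+X\equiv G+(J+X)=G+Y$ and, identically, $(H+J)+X\equiv H+Y$. Since identical literal forms have identical outcomes (the functions of Definition~\ref{def:outcomes} are computed purely from the literal form), this gives $o_L((G+J)+X)=o_L(G+Y)$, and likewise for the right outcome and for $H$.

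Now I would invoke the hypothesis $G\su H$. Since $Y\in\U$ is a legitimate test game, Definition~\ref{def:order} yields $o_L(G+Y)\ge o_L(H+Y)$ and $o_R(G+Y)\ge o_R(H+Y)$. Chaining with the identities of the previous paragraph,
$$o_L((G+J)+X)=o_L(G+Y)\ge o_L(H+Y)=o_L((H+J)+X),$$
and likewise $o_R((G+J)+X)\ge o_R((H+J)+X)$. As $X\in\U$ was arbitrary, both inequalities hold for every test game, which is exactly the statement $G+J\su H+J$ modulo $\U$.

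The argument carries no real obstacle: the only structural inputs are associativity of the disjunctive sum (whose proof is deferred to the analogue in \cite{LNNS}) and closure of $\U$ under sums, both already available. The one point deserving care is that outcomes are attached to literal forms, so the re-bracketed sums $(G+J)+X$ and $G+(J+X)$ must first be recognised as \emph{identical} games, in the sense of $\equiv$, before their outcomes may be equated; associativity supplies precisely this identification.
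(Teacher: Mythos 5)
Your proposal is correct and follows essentially the same route as the paper's own proof: both apply the hypothesis $G\su H$ to the test game $J+X$ and then use associativity of the disjunctive sum to re-bracket $G+(J+X)$ as $(G+J)+X$. Your version is slightly more explicit about the closure of $\U$ under sums and about outcomes being attached to literal forms, but the substance is identical.
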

\begin{proof}Consider any game $J\in \U$. Since $G\su H$, it follows that, $o_L(G+(J+X))\ge o_L(H+(J+X))$,
for any $X\in \U$.
Since disjunctive sum is associative this inequality is the same as
$o_L((G+J)+X))\ge o_L((H+J)+X)$. The same argument gives
$o_R((G+J)+X)\ge o_R((H+J)+X)$ and thus, since $X$ is arbitrary, this gives that $G+J\su H+J$.
\end{proof}

It is possible to have quasi-identities
such that $X\neq 0$ and $X+X=X$. Consider the universe of impartial games with
the mis\`{e}re-play convention. It is well known that, for $X=*2+*2\neq 0$, we have $X+X=X$. The full equivalence is related to invertibility as follows; the proof is similar to that of Theorem \ref{sumorder}.

\begin{corollary} \label{sumorder2} Let $G,H\in \U$. Then, for all invertible $J\in\U$,
$$G\succcurlyeq H\Leftrightarrow G+J\succcurlyeq H+J.$$
\end{corollary}

\section{The main lemma}\label{sec:downlinked}
This section concerns lemmas for Theorems \ref{thm:cnp} and \ref{thm:basic}.
They compensate a potential loss of group structure in an absolute universe.

\begin{lemma} \label{lem:and} Let $\U$ be an \abs universe, and suppose that $G,H\in \U$. If $G\not\succcurlyeq H$ then
\begin{enumerate}[]
  \item $o_L(G+T)<o_L(H+T)$, for some $T\in\U$, and
  \item $o_R(G+V)<o_R(H+V)$, for some $V\in\U$.
\end{enumerate}
\end{lemma}

\begin{proof}
Because $G\not\succcurlyeq H$, at least one of the two inequalities must hold. Without loss of generality, we may
assume the first.
Thus, let $T$ be such that $$o_1=o_L(G+T)<o_L(H+T)=o_2,$$
and we have to find a $V\in \U$ such that
\begin{align}\label{eq:toprove}
o_R(G+V)<o_R(H+V).
\end{align}
Since $\U$ is outcome saturated, for every $H^{R}\in H^{\mathcal{R}}$ there exists a game ${H^R}'=(H^R)' \in\U$ such that

\begin{align}\label{eq:o2}
o_2\le o_R({H^R}'+{H^R}).
\end{align}

Let ${\HR}'= \{{H^{R}}' : H^R\in H^{\mathcal{R}}\}$, possibly the empty set if $H$ is Right-atomic.

Since $\bs{0}\in \U$ (by definition of a universe) and since $\U$ is parental then the game
$$V=\cgs{{\HR}' ,\bs{0}}{T}\in \U.\footnote{If $\HR$ is non-empty, then the Left option $\bs{0}$ can be omitted.}$$
We claim that
\begin{align}\label{eq:GV}
o_R(G+V)\le o_1,
\end{align}
 and
\begin{align}\label{eq:HV}
o_2\le o_R(H+V),
\end{align}
regardless of $\HR$.
In the game $G+V$, Right has a move in $V$ to $G+T$, and so inequality \eqref{eq:GV} holds.

If Right's best move in $H+V$ is to $H+T$, then $o_L(H+T)=o_2$ implies \eqref{eq:HV}. If Right's best move in $H+V$ is to $H^R+V$, then Left can respond to $H^R+{H^R}'$, and so, by \eqref{eq:o2}, $o_2\le o_R(H^R+{H^R}')\le o_L(H^R+V)= o_R(H+V)$. Thus inequality \eqref{eq:HV} holds.
\end{proof}

In normal-play universes it is automatically true that $G\,+\conjt{G}\; =\bs{0}$ and that $G\succcurlyeq H$ is equivalent to
$G\,+ \conj{H}\, \succcurlyeq \bs{0}$. To get around the problem of non-invertible elements in the other absolute universes, we use the following concept, introduced independently in \cite{Ettin1996} for scoring games, and in \cite{Siege2015b} for mis\`ere games. It is defined for any universe.

\begin{definition}\label{def:downlinked} Let $G,H\in\U\subseteq(\Omega,\A )$. Then $G$ is \emph{downlinked} to $H$, denoted $G\!\swarrow\! H$, if $o_L(G+T)<o_R(H+T)$ for some $T\in \U$.
\end{definition}
\begin{lemma}[Main Lemma]\label{lem:main} Let $G, H\in \U\subseteq(\Omega,\A )$. If $\U$ is absolute, then
\begin{center}
$G\!\swarrow\! H$ if and only if $\forall G^L$, $G^L\not\succcurlyeq H$, and $ \forall H^R$, $G\not\succcurlyeq H^R$.
\end{center}
\end{lemma}

\begin{proof}
If $|\s|\le 1$ then there is nothing to prove so we may assume that $|\s|>1$. If $|\A|=1$, that is $\A=\{0\}$, then, by Lemma \ref{lem:outcome0}, $|\s|=2$, and so without loss of generality we may assume that $\s=\{-1,1\}$. This gives one out of two cases, normal- or mis\`ere-play, as displayed in the examples of universes in Section~\ref{sec:def}, and the results are known, in particular Siegel solved the case of mis\`ere-play \cite[p274]{Siege2015b}; we include these cases for completeness, and to illustrate the core idea before plunging into the harder case for scoring-play. The forward direction ignores play convention.\\

\noindent ($\Rightarrow$) Suppose $G\!\swarrow\!  H$, that is $o_L(G+T)<o_R(H+T)$ for some $T\in \U$. Then, for any $G^L$ and any $H^R$:
 \begin{eqnarray*}
   o_R(G^L+T)&\leqslant& o_L(G+T)<o_R(H+T)\mbox{, and so }G^L\not\succcurlyeq H;\\
   o_L(G+T)&<&o_R(H+T)\leqslant o_L(H^R+T)\mbox{, and so }G\not\succcurlyeq H^R.
 \end{eqnarray*}

\noindent ($\Leftarrow$) Suppose $\forall G^L$, $G^L\not\su H$, and $ \forall H^R$, $G\not\su H^R$. Hence, by Lemma~\ref{lem:and}, for each $G^{L_i}$, we have $X_i\in\U$ such that

\begin{align}\label{eq:central}
o_L(G^{L_i}+X_i)<o_L(H+X_i)
\end{align}

and, for each $H^{R_j}$, we have $Y_j$ such that

\begin{align}\label{eq:central2}
o_R(G+Y_j)<o_R(H^{R_j}+Y_j).
\end{align}

Consider first the case of $\mathcal A=\{0\}$, and let $\mathcal X=\{X_i\}$ and $\mathcal Y=\{Y_i\}$ be the sets of all such $X_i$s and $Y_i$s.

We must prove that $G\!\swarrow\! H$, i.e. we must find $T$, such that $-1=o_L(G+T)<o_R(H+T)=1$, which may be interpreted as second player wins in either case. By symmetry it suffices to demonstrate one of these, say $-1=o_L(G+T)$,  for normal- and mis\`ere-play respectively.\label{normal}

Define
$$T_{{\rm nor}}=
  \left\{
    \begin{array}{cl}
        {\bs 0} & G=H={\bs 0};\\
                \cgs{\atom{0}}{\,\conj{\HL}} & G={\bs 0}, \HL\neq\emptyset, \HR=\atom{0};\\
                \cgs{\conj{\GR}\,}{\atom{0}} & H={\bs 0}, \GL=\atom{0}, \GR\neq\emptyset;\\
       \cg{\mathcal Y}{\mathcal X} &  \HL\neq\emptyset, \GR\neq\emptyset.\\
    \end{array}
\right.
$$

Define
$$T_{{\rm mis}}=
  \left\{
    \begin{array}{cl}
        * & G=H={\bs 0};\\
                \cg{\bs{0}}{{\HL}^\circ} & G={\bs 0}, \HL\neq\emptyset, \HR=\atom{0};\\
                      \cg{{\GR}^\circ}{\bs 0} & H={\bs 0}, \GL=\atom{0}, \GR\neq\emptyset;\\
       \cg{\mathcal Y}{\mathcal X} & \HL\neq\emptyset, \GR\neq\emptyset.\\
    \end{array}
\right.
$$
In the case of normal-play, we use $T_{{\rm nor}}$ and prove in each case that Right wins $G+T$ when Left starts. In the first two cases, Left loses immediately. In the third case, Right can mimic Left's moves until end of play and hence Left loses. For the fourth case, if Left plays to $G+Y_j$, then \eqref{eq:central2} implies that Left loses, and if Left plays to $G^{L_i}+T$, then Right can respond to $G^{L_i}+X_i$, and Left loses, by \eqref{eq:central}.

The argument for mis\`ere-play is almost identical, but use instead $T_{{\rm mis}}$.

In the case of scoring-play, we use a \emph{centralization} idea. Recall that, by Definition~\ref{def:space},
for $a\in \A$, $\nu_L(a)=\nu_R(a)$.
For all Left options of $G$, let $\bs{a}_i=\pura{a_i}{a_i}$ be such that $o_L(G^{L_i}+X_i)=\nu(a_i)$, and, for all Right options of $H$, let $\bs{b}_j=\pura{b_j}{b_j}$ be such that $o_R(G+Y_j)=\nu(b_j)$.

Now, by Lemma \ref{lem:outcome1}, for each $i$,
\begin{align}\notag
o_L(G^{L_i}+X_i)<o_L(H+X_i) \Leftrightarrow
o_L(G^{L_i}+X_i-\bs{a_i})<o_L(H+X_i-\bs{a_i}).
\end{align}

\noindent
For all $i$, let $X_i'=X_i-\bs{a_i}$, and for all $j$, let $Y_j'= Y_j-\bs{b_j}$. Let $\mathcal X'=\{X_i'\}$ and $\mathcal Y'=\{Y_i'\}$ be the sets of all such thus centralized $X_i'$s and $Y_i'$s.

We know that, for all $i$, $-\bs{a_i}$ exists, and that  $X_i'\in\U$ because
$\U$ is closed under disjunctive sum.
Then, by Lemma~\ref{lem:outcome1}, for all $i$,
\begin{align}\label{eq:important}
o_L(G^{L_i}+X_i')=\nu(0) < o_L(H+X_i'),
\end{align}
where the inequality is by definition of $X_i'$ and by (\ref{eq:central}). Analogously, by (\ref{eq:central2}), for all $j$, we get
\begin{align}\label{eq:important2}
o_R(G+Y_j') = \nu(0) < o_R(H^{R_j}+Y_j').
\end{align}

 Pick any $\bs{t}=\cg{\emptyset^t}{\emptyset^t}$ and $\bs{s}=\cg{\emptyset^s}{\emptyset^s}$ with $t<s$.
By outcome saturation, for each $i$ and $j$, we can find $G^{R_i,(t)}$ and $H^{L_j,(s)}$ such that
\begin{align}\label{eq:nut}
o_R(G^{R_i,(t)}+G^{R_i}) \le \nu(t)
\end{align}
and such that $o_L(H^{L_j,(s)}+H^{L_j}) \ge\nu(s)$. Also, let $G^{(0)}$ and $H^{(0)}$ be the games with
 $o_R(G^{(0)}+G)\le \nu(0) \le o_L(H^{(0)}+H)$.
Define
$$T=
  \left\{
    \begin{array}{cc}
        \cg{\cg{G^{\mathcal R,(t)},\bs{0}}{\bs{t}-\bs{\ell}}}{\cg{\bs{s}-\bs{r}}{H^{\mathcal L,(s)},\bs{0}}} & G^\mathcal{L}=\emptyset^\ell\,\,and\,\,H^\mathcal{R}=\emptyset^r;\\
                \cg{G^{(0)}}{\mathcal X'} & G^\mathcal{L}\neq\emptyset\,\,and\,\,H^\mathcal{R}=\atom{r};\\
                      \cg{\mathcal Y'}{H^{(0)}} & G^\mathcal{L}=\atom{\ell}\,\,and\,\,H^\mathcal{R}\neq\emptyset;\\
       \cg{\mathcal Y'}{\mathcal X'} & G^\mathcal{L}\neq\emptyset\,\,and\,\,H^\mathcal{R}\neq\emptyset.\\
    \end{array}
\right.
$$
Because of the parental property, $T\in \Omega$.

Now, we will argue that $G\!\swarrow \! H$ via $T$. There are four cases,
one for each case in the definition of $T$.

\textbf{[1]:} In the first case, we claim that
  \begin{align}\label{eq:doubleatom}
  o_L(G+T) = o_R\left(G+\cg{G^{\mathcal R,(t)},\bs{0}}{\bs{t}-\bs{\ell}}\right)\le\nu(t).
  \end{align}
  That $\nu(s)\le o_L(H+\cg{\bs{s}-\bs{r}}{H^{L_j,(s)},\bs{0}}) = o_R(H+T)$ follows by the same argument. By the
  definition of $t<s$, and since $\nu$ is order preserving, together this implies that  $o_L(G+T)<o_R(H+T)$.
  To prove the claim, the first equality in (\ref{eq:doubleatom}) follows since there is no Left move in $G$.
  For the inequality, if Right moves in $G$, then Left has a response to $G^{R_i}+G^{R_i,(t)}$,
  with outcome as in (\ref{eq:nut}). Right can play to $G+\bs{t}-\bs{\ell}$, which proves this case,
  since $o_L(G+\bs{t}-\bs{\ell})=\nu (\ell+t-\ell)=\nu(t)$.

\textbf{[2]:} In the second case,
      $$o_L(G+T)\le \nu(0)<o_L({H+X_i'}) = o_R(H+T).$$
      The first equality follows, because if Left moves to $G+G^{(0)}$, then $o_R(G+G^{(0)})\le\nu(0)$. If Left moves to $G^{L_i}+T$, then Right has a response to $G^{L_i}+X_i'$, with $o_L(G^{L_i}+X_i')  = \nu(0)$, by (\ref{eq:important}), so Left cannot do better than $\nu(0)$. The equality follows because Right does not have a move in $H$.

\textbf{[3]:}  This argument is in analogy with the second case.

\textbf{[4]:} In case that Left has an option in $G$ and Right has an option in $H$, the following diagram shows all the cases. Possible best play for Left in $G+T$ is indicated with the uppermost `or' (and similarly for Right's play in the game $H+T$ in the bottom). We use the centralization to $\nu(0)$ to bound the desired left outcomes.

 \begin{center}
 \psset{xunit=1.0cm,yunit=1.0cm,algebraic=true,dotstyle=o,dotsize=3pt 0,linewidth=0.8pt,arrowsize=3pt 2,arrowinset=0.25}
 \begin{pspicture*}(0.4,-4.8)(12,5.2)
 \rput[tl](4.26,5.1){$ o_L(G+T) $}
 \psline{->}(5.02,4.4)(1.76,2.18)
 \psline{->}(5.02,4.4)(8.5,2.28)
 \rput[tl](2.86,3.62){$=$}
 \rput[tl](7.04,3.7){$=$}
 \rput[tl](4.88,3.62){`{\text or}'}
 \rput[tl](0.98,1.9){$ o_R(G^{L_i}+T) $}
 \rput[tl](6.76,1.9){$ o_R(G+Y_j') = \nu(0)$}
 \rput[tl](1.7,1.1){$\le$}
 \rput[tl](0.5,0.33){$ o_L(G^{L_i}+X_i') = \nu(0)$}
 \rput[tl](4.66,-4.18){$ o_R(H+T) $}
 \psline{->}(5.36,-3.72)(1.7,-2.38)
 \psline{->}(5.36,-3.72)(8.84,-2.32)
 \rput[tl](2.86,-3.24){$=$}
 \rput[tl](7.22,-3.32){$=$}
 \rput[tl](5.0,-2.58){\text{ `or'}}
 \rput[tl](7.64,-1.64){$ o_L(H+X_{m}') $}
 \rput[tl](4.8,-.5){``$<$''}
 \rput[tl](0.86,-1.84){$ o_L(H^{R_k}+T)$}
 \psline[linestyle=dashed,dash=1pt 2pt](8.5,-0.9)(2.2,-1.4)
 \end{pspicture*}
 \end{center}
We claim that, by construction, the Left outcomes below the dotted line reside \emph{above} $\nu (0)$. Namely, by (\ref{eq:important2}), a (perhaps suboptimal) Left move in $T$ gives $$ o_L(H^{R_k}+T)\ge o_R(H^{R_k}+Y_k') > \nu(0).$$
Moreover, by (\ref{eq:important}), for all $m$, $o_L(H+X_m') > \nu(0)$.
 For the left most inequality, Right played a (perhaps suboptimal) move in the $T$ component.
\end{proof}

The next lemma concerns an elementary rephrasing of the downlinked idea for any universe of games.
\begin{lemma} \label{pre} Let $G,H\in \U\subseteq(\Omega,\A )$.
Then $G\succcurlyeq H$ implies $\forall H^L$, $G\not\!\swarrow \! H^L$, and $\forall G^R$, $G^R\not\!\swarrow \! H$.
\end{lemma}
\begin{proof}
For all $T\in \U$, for any $H^L$, we have
$$o_L(G+T)\geqslant o_L(H+T)\geqslant o_R(H^L+T).$$
Therefore, we cannot have $o_L(G+T)<o_R(H^L+T)$.

For all $T\in \U$, for any $G^R$, we have
$$o_L(G^R+T)\ge o_R(G+T)\geqslant o_R(H+T).$$
Therefore, we cannot have $o_L(G^R+T)<o_R(H+T)$.
\end{proof}

\section{The main result}\label{sec:main}
We summarize the results in the previous section by proving the Maintenance of the Absolute Fundamental Theorem, Theorem~\ref{thm:basic}. It says that, in any absolute universe, if Left prefers a game $G$ before $H$, then she has a  response to maintain this preference for any starting move by player Right, and where we think of a move in the $H$ component as a move on its conjugate, by the other player. 

\begin{theorem}[Maintenance] \label{thm:cnp}
Suppose that $\U\subseteq(\Omega,\A )$ is an absolute universe of combinatorial games and let $G,H\in \U$.
If $G\succcurlyeq H$, then:
\begin{enumerate}[]
  \item For all $G^R$, there is a $H^R$ such that $G^R\succcurlyeq H^R$ or there is a $G^{RL}$ such that $G^{RL}\succcurlyeq H$;
  \item For all $H^L$, there is a $G^L$ such that $G^L\succcurlyeq H^L$ or there is a $H^{LR}$ such that $G\succcurlyeq H^{LR}$.
\end{enumerate}
\end{theorem}

\begin{proof}
By Lemma~\ref{pre}, $G$ is not downlinked to any $H^L$ and no $G^R$ is downlinked to $H$.

Therefore Lemma~\ref{lem:main} gives that, for all $H^L$, there is a $G^L$ such that $G^L\succcurlyeq H^L$ or there is a $H^{LR}$ such that $G\succcurlyeq H^{LR}$ and for all $G^R$, there is a $H^R$ such that $G^R\succcurlyeq H^R$ or there is a $G^{RL}$ such that $G^{RL}\succcurlyeq H$.
\end{proof}

Next we restate and prove the main result, the Absolute Fundamental Theorem, Theorem~\ref{thm:basic}, of semi-local game comparison, also called basic order or play-comparison \cite{LarssNS2018,LarssNS2018c}, for absolute universes.\\

\noindent {\bf Theorem \ref{thm:basic}.} (Basic Game Comparison)
{\em Suppose $\U\subseteq(\Omega,\A )$ is an \abs universe of combinatorial games and let $G,H\in \U$. Then $G\succcurlyeq H$ if and only if the following two conditions hold\vspace{0.2 cm}

\noindent
Proviso:
\begin{enumerate}[]
 \item $o_L(G+X)\geqslant o_L(H+X)$ for all Left-atomic $X\in \U$;

 \item
 $o_R(G+X)\geqslant o_R(H+X)$ for all Right-atomic $X\in \U$;
\end{enumerate}
\noindent
Maintenance:
\begin{enumerate}[]
\item For all $G^R$, there is an $H^R$ such that $G^R\succcurlyeq H^R$, or there is a $G^{RL}$ such that $G^{RL}\succcurlyeq H$;
\item For all $H^L$, there is a $G^L$ such that $G^L\succcurlyeq H^L$, or there is an $H^{LR}$ such that $G\succcurlyeq H^{LR}$.
\end{enumerate}
}
\begin{proof}
($\Rightarrow$) $G\succcurlyeq H$ implies the Maintenance by Theorem \ref{thm:cnp}. The Proviso also holds because, if not, it directly contradicts $G\succcurlyeq H$.\\

($\Leftarrow$) Assume the Proviso and the Maintenance, and suppose that $G\not\succcurlyeq H$. The last happens because $o_L(G+X)<o_L(H+X)$ or $o_R(G+X)<o_R(H+X)$ for some $X$. Consider an $X$ of smallest birthday in those conditions and, without loss of generality, assume  $o_L(G+X)<o_L(H+X)$. Left starts disjunctive sum $H+X$. We have three cases:
\begin{enumerate}
  \item The game $X$ cannot be Left-atomic since this is in contradiction with the assumption of Proviso.
  \item Suppose that Left's optimal move is in the $H$ component, i.e. $o_L(H+X)=o_R(H^L+X)$. Because of the Maintenance, we have either, that there exists a $G^L$ such that $G^L\succcurlyeq H^L$ or there exists a $H^{LR}$, such that $G\succcurlyeq H^{LR}$. If the first holds, $$o_L(G+X)\geqslant o_R(G^L+X)\geqslant o_R(H^L+X)=o_L(H+X).$$ If the second holds, $$o_L(G+X)\geqslant o_L(H^{LR}+X)\geqslant o_R(H^L+X)=o_L(H+X).$$ Both contradict the assumption $o_L(G+X)<o_L(H+X)$.
  \item Suppose $o_L(H+X)=o_R(H+X^L)$. By the smallest birthday assumption, $o_R(G+X^L)\geqslant o_R(H+X^L)$. Therefore, $$o_L(G+X)\geqslant o_R(G+X^L)\geqslant o_R(H+X^L)=o_L(H+X).$$ Once again, this contradicts the assumption $o_L(G+X)<o_L(H+X)$.
\end{enumerate}
Hence, we have shown that $G\succcurlyeq H$.
\end{proof}

It may seem at a first glance that the `for all atomic $X$' part of the Proviso is still a burden. But, we will see in the next section, in Proposition ~\ref{thm:allcomps}, that the Proviso will often be much simplified, in a shift from basic to subordinate order.

\section{\Abs universes and their provisos}\label{sec:subordinate}
In Section~\ref{sec:npoe}, we continue investigating how normal-play is central to our theory, via an embedding of universes. Then, in Section~\ref{sec:subord}, we study how the basic game comparison, from the last section, translates to a given absolute universe, to obtain a subordinate and effective level of game comparison.

Recall that we compare games modulo some universe $\U$. In this section we will develop tools for interchanging comparison universes, while keeping the game trees identical. In this study we settle the problem: how does comparison in normal-play compare with comparison in other \abs universes? We sometimes index the set of adorns via $\A=\A_\U$ to clarify that the set of adorns belong to a given universe $\U\subseteq \Omega$. For comparison in a universe $\U$, we abbreviate $G\succcurlyeq H$ modulo $\U$, by $G\succcurlyeq_\U H$.

\subsection{A normal-play order embedding }\label{sec:npoe}
In normal-play, $X$ atomic implies that $X$ is a number.
By the Number Avoidance Theorem, \cite{Conwa2001}, `do not play in a number, unless all components are numbers',  the Proviso reduces to
``$o_L(G)\geqslant o_L(H)$ and $o_R(G)\geqslant o_R(H)$'', and this follows by induction on the
Maintenance. Hence, as we already noted in the Second Fundamental Theorem, in this case, the proviso can be removed.

Although intuitively clear, we have not yet formally introduced the concept of ``winning'', but of course, for example in normal-play, Left wins $G$ independently of who starts whenever $o(G) = (+1,+1)$.\footnote{An order preserving `winning function' can be defined to make the jump from outcomes to winning consistent with our theory.} The reason for this apparent negligence, is that the various abstract game comparisons do not require an explicit definition of winning. In normal- and mis\`ere play win-loss is the natural notion, but a scoring-play universe cannot in general be reduced to a na\"ive win-loss situation, although in a recreational playing the terminal score is often reinterpreted as win-draw-loss. In theory, it is more convenient to apply a minimax algorithm on the terminal scores. As we have seen, normal- and mis\`ere-play can also be conveniently recasted in this manner via the $\nu$ function.

Moreover, we may relax a game in any universe to view only its game tree, and then play it using the normal-play convention. Formally, we use a map on the group of adorns, where each adorn becomes the trivial element; given a game $G\in \U$, for each atom $\atom{a}$ in each atomic follower of $G$, let $\atom{a}$  $\rightarrow \atom{0}$. Denote this normal-play projection of $G\in \U$ by $G_{\Np}\in \Np$, and we may omit the subscript when the context is clear.

Further universal aspects of normal-play are revealed in the following observation and theorems.

\begin{observation}\label{obs:mup}
We revisit Example~\ref{ex:dicot}, which concerns dicot mis\`ere.
In both normal- and mis\`ere-play,
let $*=\cg{\bs 0} {\bs 0}$, $\uparrow \; = \{\bs 0\mid *\}$ and $\mup \;=  \{\bs 0,*\mid *\}$
be the games where these are the literal forms (no reductions by domination or reversibility)
There is an \emph{order-preserving} map but not an \emph{order-embedding}
of mis\`ere-play into normal-play. For instance, consider $\D$ the dicot games with mis\`{e}re-play convention. Then $\mup \;
\succcurlyeq_{\D} \bs 0$, and similarly $\mup\; =_{\Np} \; \uparrow \; \su_{\Np} \bs0$.
But the opposite is not true: $\uparrow\; \su_{\Np} \bs 0$, but $\uparrow \fuzzyU{\D}  \bs 0$.
\end{observation}

\begin{theorem}[Normal-play Order Preserving Map]\label{thm:npop}
Let $G, H\in \U$, an \abs universe. If $G\su_\U H$ then $G\su_{\Np} H$.
\end{theorem}

\begin{proof}
Apply the Maintenance for the universe $\U$ as a strategy for Left to win $G_\Np-H_\Np$ playing second.
\end{proof}

However $G\succ_\U H$ does not imply $G\succ_\Np H$; see Example~\ref{ex:dicot}.

We can prove a little more, by looking beyond Observation~\ref{obs:mup}, and instead adapt ideas from guaranteed scoring games \cite{LarssNNS2016,LarssNS2018,LarssNS2018c}.

Let $G\in \mathbb{Np}$, and let $\U\subseteq\Omega$ be a universe of combinatorial games.
Define the \emph{normal-play mapping} $\zeta:\mathbb{Np} \hookrightarrow \U$ as $\zeta(G)\equiv G$, where ${\bs 0}\in \A_\Np$ becomes instead ${\bs 0}\in \A_\U$, and as before ``$\equiv$'' denotes identical literal forms. The game tree and all adorns are identical, but play convention and thus the surrounding context changes, i.e. the interpretation of `$\boldsymbol 0$' is reflected in the maps $\nu_L$ and $\nu_R$ for the respective universes. Observe that in the game $\zeta(G)$, any terminal score is 0, and so played alone this game is trivial. But in disjunction with another scoring game, it could have a major influence on the outcome. For example, if $H\in \U$ is a zugzwang, then the normal-play game $G=\bs 1$, gives a beneficial situation for Left in the game $\zeta(G)+H$.

In normal-play if the game value is an integer $n$, it denotes ``$n$ moves for Left'' if $n \ge 0$ and otherwise it denotes ``$n$ moves for Right''. If we let $\U$ be a scoring-play universe, then $n$ normal-play moves map to $n$ scoring-play moves (for the same player), indicated by $\zeta(n) = \overline n$, if $n\ge 0$, and $\zeta(n) = \underline n$ if $n\le 0$, as not to confuse with the other type of `numbers' in scoring-play, that is the `scores' $\boldsymbol a = \pura{a}{a}$.

For a game $G$, let $${\rm adorn}_L(G) = \{\ell\mid \cgs{\atom{\ell}}{\GR} \text{ is a follower of } G\}$$ and $${\rm adorn}_R(G) = \{r\mid \cgs{\GL}{\atom{r}} \text{ is a follower of } G\}$$ denote sets of adorns in atomic followers of $G$, and let ${\rm adorn}(G) ={\rm adorn}_L(G)\cup {\rm adorn}_R(G)$.

\begin{definition}[Horror Vacui]\label{def:horvac}
A universe $\U\subseteq(\Omega,\A)$ is horror vacui, \footnote{``Nature abhors a vacuum" is a postulate attributed to Aristotle.} if, for all games of the forms $X=\cg{X^\mathcal{L}}{\emptyset^r}$, for $i\in \{L,R\}$, $$\nu_i(\max ({\rm adorn }_i(G))\le \nu_R (r),$$ and for all games of the form $X=\cg{\emptyset^\ell}{X^\mathcal{R}}$, for $i\in \{L,R\}$, $$\nu_i(\min ({\rm adorn}_i(G))\ge \nu_L (\ell).$$
\end{definition}

Intuitively, if a player is faced with a position in which they have no move, they would rather `pass' and have
the other player continue than have the game finish. Of course, in normal-play, the worst possible `option' is the
empty set. In mis\`{e}re-play the `worst' would be an infinite string of moves, but such games go beyond this study.
The \emph{horror vacui} property captures the worst
of normal-play, and, in such horror vacui universes, we will show that we have an order embedding to normal-play. By definition, each normal-play universe is horror vacui, but no mis\`ere-play universe is horror vacui. Guaranteed scoring-play is the archetype of a non-normal-play horror-vacui universe.

\begin{theorem}[Normal-play Order Embedding]\label{thm:npoe}
Suppose that $\U\subseteq(\Omega,\A)$ is an \abs and horror vacui universe. Then, the normal-play mapping $\zeta:\Np\hookrightarrow \U$ is
an order-embedding.
\end{theorem}

\begin{proof}
Theorem \ref{thm:npop} holds in particular for any game $G\in\U$, an absolute universe, where all adorns are 0s. Thus, it suffices to prove that $G\su_\Np H$
implies $\zeta(G) \succcurlyeq_\U \zeta(H)$.

First, suppose that Left has no option in $\zeta(H)+X$, $X\in \U$. This occurs when $X=\cg{\emptyset^x}{X^\mathcal{R}}$ and $H=\cg{\emptyset^0}{H^\mathcal{R}}$. In this case, $o_L(\zeta(H)+X) = \nu_L(x)$, with evaluation in the universe $\U$.
Now  $\zeta(G)+X = \zeta(G) + \cg{\emptyset^x}{X^\mathcal{R}}$; if Left has a move in $G$, then Right could never  improve his prospects by playing in the $X$ component, because $\U$ is horror vacui (Definition~\ref{def:horvac}). If Left cannot play in $G$ the game is over and
$o_L(\zeta(G)+X) = \nu_L(x)$. In both cases, $o_L(\zeta(G)+X) \geqslant \nu_L(x)=o_L(\zeta(H)+X)$.

Now assume that Left has a move in $\zeta(H)+X$, and with evaluation in $\U$, suppose first that there is a Left move, $X^{L}$, such that
  $o_L(\zeta(H)+X)=o_R(\zeta(H)+X^{L})$. Then, by induction, $o_R(\zeta(H)+X^{L})\leqslant o_R(\zeta(G)+X^{L})$ and
  $o_R(\zeta(G)+X^{ L})\leqslant o_L(\zeta(G)+X)$ which yields $o_L(\zeta(H)+X) \leqslant o_L(\zeta(G)+X)$.

  The remaining case is that there is a Left move, $H^L$, with $o_L(\zeta(H)+X)=o_R(\zeta(H^{L})+X)$.\footnote{We may identify $\zeta(H^L)=\zeta(H)^L$.} In $\mathbb{Np}$, $G\su H$, i.e., $G - H\su_\Np  \bs{0}$ and  so Left has a winning move in $G-H^{L}$. There are two possibilities, either $G^{L}-H^{L}\su_\Np \bs{0}$, for some $G^{L}$,
or $G-H^{LR}\su_\Np  \bs{0}$, for some $H^{LR}$. If the first occurs, then $G^{L}\su_\Np  H^{L}$
and, by induction, $o_R(\zeta(G^{L})+X)\geqslant o_R(\zeta(H^{L})+X)$ which gives the inequalities
\[o_L(\zeta(H)+X)=o_R(\zeta(H^{L})+X)\leqslant o_R(\zeta(G^{L})+X)\leqslant o_L(\zeta(G)+X).\]
If $G-H^{LR}\su_\Np  \bs{0}$ occurs, then, by induction,
$o_L(\zeta(G)+X) \geqslant o_L(\zeta(H^{LR})+X)$. We also have $o_L(\zeta(H^{LR})+X) \geqslant o_R(\zeta(H^{L})+X)$ and
since we are assuming that $o_L(\zeta(H)+X)=o_R(\zeta(H^{L})+X)$,
we can conclude that $o_L(\zeta(G)+X)\geqslant o_L(\zeta(H)+X)$.
\end{proof}

\subsection{Interpretation of the provisos in different universes}\label{sec:subord}
Absolute universes as considered in the literature, have unique interpretations of the Proviso, on the subordinate level. For each scoring-play universe, we have $\A=\mathbb R$.
\begin{enumerate}
\item In {\em normal-play}, $\Np$, the Proviso is implied by Maintenance.
\item In \textit{dicot mis\`{e}re-play}, $\D$, the cases $X$ Left-atomic or $X$ Right-atomic individually implies $X\equiv \bs{0}$. Consequently,
requiring that $o_L(G+X)\geqslant o_L(H+X)$ ($X$ Left-atomic) and $o_R(G+X)\geqslant o_R(H+X)$ ($X$ Right-atomic) is the same as requiring $o_L(G)\geqslant o_L(H)$ and $o_R(G)\geqslant o_R(H)$.

\item The case of {\em dead-ending mis\`ere-play}, $\E$, has some similarities with guaranteed scoring-play below, and we define the set of  waiting protected games, adapted from \cite{LarssMNRS2018}.\footnote{Waiting protected games are called `perfect murder' games in \cite{LarssMNRS2018}.} The Proviso simplifies to waiting protected  outcomes, defined by $\underline{o}_L(G) =  \min\{o_L(G+\underline n) : n\ge 0\}$ and
 $\overline{o}_R(G) = \max\{o_R(G+\overline{n}) : n\ge 0\}$. Where, $\underline{{\bs 0}}={\bs 0}$, and for $n\ge 1$,  $\underline n$ is the game where Right can either eliminate the game, or play to $\underline{n-1}$. The analogous concept holds for $\overline{n}$ and Left. We use that $\underline{o}_L(G)\geqslant \underline{o}_L(H)$ and $\overline{o}_R(G)\geqslant \overline{o}_R(H)$ together with the Maintenance imply the Proviso (by induction). Recall Example~\ref{ex:deadend}.

\item The structure of the {\em free mis\`{e}re space}, $\M$, is not as rich as that of $\mathbb{Np}$.
For example, $G=\bs{0}$ implies that $G\equiv \bs{0}$, the empty tree. Suppose that $G^\mathcal{L}\neq\atom{0}$. Then, by a mimic-strategy, Left loses playing first in the game $G+\{\atom{0}\,|\,{G^\mathcal{L}}^\circ\}$, where ${G^\mathcal{L}}^\circ$ is the set of all adjoints of $G^L\in \GL$. This proves that $G\not\equiv {\bs 0}$, a next player winning position. For the definition of adjoint, see Definition~\ref{def:adjoint} adapted from  \cite{Siege2013} Chap. V.6.). Siegel \cite{Siege2015b} proved that we can still have local comparison.

\item For \textit{dicot scoring-play}, $\mathbb {Ds}$, Ettinger \cite{Ettin1996} proved local comparison for scores, i.e. ``Is $G\su \pura{a}{a}$''.
The cases $X$ Left-atomic and $X$ Right-atomic imply $X=\pura{a}{a}$ (or more generally $X=\pura{\ell}{r}$) and hence
the Proviso only needs to consider alternating play, similar to dicot mis\`ere.

\item \textit{Guaranteed scoring-play} \cite{LarssNNS2016}, $\mathbb{Gs}$, has a straightforward translation of the Proviso, somewhat simpler than for dead-end mis\`ere,
and a normal-play order embedding. The last is explained by the fact that the `guaranteed property' \emph{is} the horror vacui property in Definition~\ref{def:horvac}.
The Proviso simplifies to the \emph{waiting-protected left- and right-scores}, defined by $\underline{o}_L(G) =  \min\{o_L(G+\underline n) : n\ge 0\}$ and
 $\overline{o}_R(G) = \max\{o_R(G+\overline{n}) : n\ge 0\}$.\footnote{Waiting protected is called pass-allowed in \cite{LarssNNS2016}.} Where, $\underline{0}={\bs 0}$, and for $n\ge 1$,  $\underline n$ is the game where Right can play to $\underline{n-1}$. That is, $\underline{o}_L(G)$ is the best score Left can achieve if Right, and only Right,
is allowed to pass as many times as he wishes (as discussed in Section~\ref{sec:guaranteed}).
Similarly, $\overline{o}_R(G)$ is the best score that Right can achieve if Left is allowed to pass. We use that $\underline{o}_L(G)\geqslant \underline{o}_L(H)$ and $\overline{o}_R(G)\geqslant \overline{o}_R(H)$ together with the Maintenance imply the Proviso (by induction). Recall Example~\ref{ex:guaranteed}.
\item \textit{Free scoring space} \cite{Stewa2011, Stewa2019}, $\mathbb S$. The situation is similar to free mis\`ere-play, but with infinitely many terminal scores. A similar argument to that in item 3 gives that $G=\pura{\ell}{r}$ if and only if $G\equiv \pura{\ell}{r}$.
\end{enumerate}
We get the following result, concerning the details of a local (or subordinate) Proviso, of the aforementioned absolute universes. Note that the pass allowed outcomes depend on the given universe, dead-end or scoring, respectively.
\begin{theorem}[Subordinate Game Comparison]\label{thm:allcomps}  Let $G, H\in \U\subseteq(\Omega,\A)$, an \abs universe. Then $G\succcurlyeq_{\U} H$ if the Maintenance holds and ${\U} $ is
\begin{enumerate}
\item  normal-play;
\item dicot mis\`ere-play, and $o(G)\geqslant o(H)$;
\item dead-ending mis\`ere-play, and $\underline{o}_L(G)\geqslant \underline{o}_L(H)$ and $\overline{o}_R(G)\geqslant \overline{o}_R(H)$;
\item free mis\`ere-play, and $H^\mathcal{L}=\atom{0}\Rightarrow G^\mathcal{L}=\atom{0}$ and $G^\mathcal{R}=\atom{0}\Rightarrow H^\mathcal{R}=\atom{0}$;
\item dicot scoring-play, and $o(G)\geqslant o(H)$;
\item guaranteed scoring-play, and $\underline{o}_L(G)\geqslant \underline{o}_L(H)$ and $\overline{o}_R(G)\geqslant \overline{o}_R(H)$;
\item free scoring-play, and $H^\mathcal{L}=\atom{a}\Rightarrow G^\mathcal{L}=\atom{b}$ with $b\ge a$, and $G^\mathcal{R}=\atom{a}\Rightarrow H^\mathcal{R}=\atom{b}$ with $a\ge b$.
\end{enumerate}

\end{theorem}
\begin{proof}
All, except item 7 is explained in the list preceding the theorem. Let us give the proof of item 7. We are assuming Maintenance and wish to prove that: $H^\mathcal{L}=\atom{a}\Rightarrow G^\mathcal{L}=\atom{b}$ with $b\ge a$, and $G^\mathcal{R}=\atom{a}\Rightarrow H^\mathcal{R}=\atom{b}$ with $a\ge b$, is equivalent with
\begin{enumerate}
 \item $o_L(G+X)\geqslant o_L(H+X)$ for all Left-atomic $X\in \Omega$;
 \item $o_R(G+X)\geqslant o_R(H+X)$ for all Right-atomic $X\in \Omega$;
\end{enumerate}
\noindent $(\Rightarrow)$ Assume $H^\mathcal{L}=\atom{a}$ and $G^\mathcal{L}=\atom{b}$,  with $b\ge a$. Then Left cannot move in both $G+X$ and $H+X$, where $X$ is Left atomic. By the assumption $b\ge a$, item 1 holds by additivity of adorns. Analogously item 2 holds.\\
\noindent $(\Leftarrow)$ Given the Proviso, i.e. items 1 and 2, and Maintenance, we wish to show that $H^\mathcal{L}=\atom{a}\Rightarrow G^\mathcal{L}=\atom{b}$, with $b\ge a$. Assume $H^\mathcal{L}=\atom{a}$. Thus, $o_L(H+X)= a+x$, if $X$  Left-atomic, with $\XL=\atom{x}$. Suppose that $|\GL| > 0$. Then, we must find a Left-atomic $X$ such that $o_L(G+X) < a+x$, where $\XL=\atom{x}$. Let $X=\cgs{\atom{x}}{\,\conj{\GL}}$, with $x=1-a$, and so $o_L(G+X) \le 0 < a+x$. Therefore $|\GL|= 0$, and the proviso implies $b\ge a$.
\end{proof}

\section{Discussion of outcome saturation}\label{sec:dense}
Consider a universe $\U$ and a game $G\in \U$. Recall the definition of conjugate and adjoint.
The conjugate of $G$ is
\[ \conjt{G}\, =
\begin{cases}
\pura{-r}{-\ell}, &\mbox{ if $G=\pura{\ell}{r}$, $\ell, r\in \A$}\\
\cgs{\conjt\GR\;}{\emptyset^{-\ell} }, &\mbox{ if $G=\cgs{\emptyset^{\ell}}{\GR}$}\\
\cgs{\emptyset^{-r}}{\;\conjt\GL }, &\mbox{ if $G=\cgs{\GL}{\emptyset^{r}}$}\\
\cgs{\conjt\GR\; }{\;\conjt\GL }, & \mbox{ otherwise}.
\end{cases}
\]

 The adjoint of $G$ is
\[
G^\circ =
\begin{cases}
\cg{-{\bs \ell}}{-{\bs r}} & \textrm{if  $G = \pura{\ell}{r}$}; \\
\cg{\GR^\circ}{-{\bs \ell}} & \textrm{if $|\GR|>0$ and $\GL=\atom{\ell}$}; \\
\cg{-{\bs r}}{\GL^\circ} & \textrm{if $|\GL|>0$ and $\GR=\atom{r}$}; \\
\cg{\GR^\circ}{ \GL^\circ} & \textrm{otherwise}.
\end{cases}
\]

The adjoint has an analogous property to the conjugate. By using the evaluation function, $\nu$, we may unify the classical settings of normal- and mis\`ere-play with that of scoring-play etc. Typically, the adjoint is used in mis\`ere-play, whereas the conjugate is used in normal-play. As we will see, either may be useful in scoring-play. The following lemma is used implicitly in the proof of Theorem~\ref{thm:pardense}.
\begin{lemma}\label{lem:adjoint}
Consider a universe $\U$ and a game $G\in \U$. Then $o_L(G+G^\circ)\le \nu_R(0)$ and
$o_L(G+\conj{G}\,)\le \nu_L(0)$.
\end{lemma}
\begin{proof}
Right can mimic each Left move. Then use additivity of the adorns. In the first case, Right is the current player at the terminal position. In the second case, this player is Left.
\end{proof}

For any scoring universe $\U$, with $G\in\U$ and $\ell,r\in \R$, we have $o_L(G+G^\circ+\pura{\ell}{r})\le \nu(r)$ and $o_L(G+\conjt{G}+\pura{\ell}{r})\le \nu(\ell)$.

\begin{observation}\label{obs:conjadj}
For any scoring universe $\U$, with $G\in\U$ and $\ell,r\in \R=\A$, we have $o_L(G+G^\circ+\pura{\ell}{r})\le r$ and $o_L(G+\conjt{G}+\pura{\ell}{r})\le \ell$. Similarly, $o_R(G+G^\circ+\pura{\ell}{r})\ge \ell$ and $o_R(G+\conjt{G}+\pura{\ell}{r})\ge r$.
\end{observation}
The inequalities can be reversed, if we make a small change to the `adjoint', which is the variant we use in the proof of the main lemma in this paper. This is the variant that induces the definition of (scoring-play) saturation.\footnote{For the classical winning conditions (normal- and mis\`ere-play) saturation is usually the stronger notion: for any game $G$, and any outcome $x$, there is a game $H$ such that $o(G+H)=x$.}
\begin{observation}\label{obs:conjadj2}
For any scoring universe $\U$, for any $G\in\U$, $x\in \R$, we have $o_L(G+G^\circ+\cg{\bs \ell}{\cdot})\ge \bs \ell$ and $o_L(G+\conjt{G}+\cg{\bs \ell}{\cdot})\ge \bs \ell$. Similarly $o_R(G+G^\circ+\cg{\cdot}{\bs r})\le \bs r$ and $o_R(G+\conjt{G}+\cg{\cdot}{\bs r})\le \bs r$.
\end{observation}

We review, the mis\`ere-play outcome saturation, and normal-play is analogous, but instead of the adjoint it uses the conjugate.

\begin{definition}[Mis\`ere-play Adjoint \cite{Siege2013}]\label{def:adjoint}
Let $\U$ be a mis\`ere-play universe. The \emph{adjoint} of $G\in \U$, is the game $G^\circ$, given by
\[
G^\circ =
\begin{cases}
\cg{{\bs 0}}{{\bs 0}} & \textrm{if  $G = {\bs 0}$}; \\
\cg{\GR^\circ}{{\bs 0}} & \textrm{if $G \neq {\bs 0}$ and $G$ is Left-atomic}; \\
\cg{{\bs 0}}{\GL^\circ} & \textrm{if $G \neq {\bs 0}$ and $G$ is Right-atomic}; \\
\cg{\GR^\circ}{ \GL^\circ} & \textrm{otherwise},
\end{cases}
\]
where $\GR^\circ$ is the set of games of the form ${G^R}^\circ$.
\end{definition}

Note that the adjoint $G^\circ\in \U$, if $G\in \U$, and it is never atomic. The following observation is well known.

\begin{observation}[Mis\`ere-play Saturation]\label{obs:misout}
Given any game $G$ in mis\`ere-play, using the adjoint $G^\circ$, we get that $G+G^\circ$ is a P-position, that is $o(G+G^\circ)=(-1,+1)$.\footnote{Here, we use the classical notation of outcomes relating to the standard winning convention.}

Given any mis\`ere-play game $G$, it is required to find games $G_L, G_R, G_N$ and $G_P$ such that each outcome class is represented, that is such that $o(G+G_i)=i$. We already shown this for $G_P = G^\circ$. For  $G_N$, we take $G_N=\cg{G^\circ}{G^\circ}$; the first player to move in $G + G_N$ can move to $G+G^\circ$ and win. Hence $o(G+G_N)=\mathscr N$. Consider the game $G_L=\cg{\GR^\circ, G^\circ}{G_N}$ (where we assume there is a $G^R$ otherwise omit that part). We determine the outcome of $G+\cg{\GR^\circ, G^\circ}{G_N}$. If Left begins she can play to $G + G^\circ$ and win. If Right begins by playing in the $G$-component,  then Left can respond to $G^R+{G^R}^\circ$ and win because this is a P-position. Otherwise Right plays to $G+G_N$ which is an (already proved) N-position, so Left wins in either case. Hence $o(G+G_L)=\mathscr L$, and an analogous argument proves that $o(G+G_R)=\mathscr R$.
\end{observation}

\section{Future directions and open problems}
Let us mention a couple of related problems.

\begin{itemize}
\item Are there infinitely many absolute universes \cite{LarssNSabsinf}?
\item Prove reduction theorems, via domination and reversibility, and discuss the possibility of an absolute `canonical form' game value. For what choice of `smallest' do we obtain uniqueness? Observe that for normal-play no choice is required, because uniqueness is automatic.  For the Guaranteed and Dead-ending universes a choice of `smallest' form is necessary.
\item Normal-play games satisfy a nice categorical structure. It is demonstrated in \cite{LarssNS2018} that certain aspects of this structure generalize to absolute universes. In that paper it is also shown that Absolute game comparison has an analogue play-comparison to normal-play games, but where the given Proviso induces the termination of a game. See \cite{LarssNS2018} for open problems in related areas.
\item Prove mean value theorems that generalize the classical settings of Hanner (with respect to `scores') and Conway (with respect to `moves'). In particular, is it possible to have a combined mean value with respect to `scores' and `moves' in various scoring-play settings, say for example in Guaranteed scoring-play?
\item Is there an absolute theory for Loopy (cyclic) games?
\end{itemize}

\bibliographystyle{plain}
\bibliography{games7.bib}
\end{document}